\theoremstyle{plain}
   \newtheorem{twierdzenie}{Theorem}
\theoremstyle{definition}
   \newtheorem*{example}{Example}
\renewcommand{\arraystretch}{1.2}
\newcommand*{\Z}[0]{{\mathbb Z}}
\title[Period integrals of rigid double octic Calabi-Yau threefold]
{Computing period integrals \\of rigid double octic Calabi-Yau threefolds \\with Picard-Fuchs operator}
\subjclass[2010]{Primary: 14J32; Secondary: 11F67, 14D05}
\keywords{Calabi-Yau threefolds, modular forms, period integrals, Picard-Fuchs operator}
\author{Tymoteusz Chmiel}
\address{Jagiellonian University,
ul. {\L}ojasiewicza 6,
30-348 Krak\'ow,
Poland
}
\begin{document}
\maketitle
\vspace{-8mm}
\begin{abstract}
  We present a method for numerical
  computation of period integrals of a rigid Calabi-Yau threefold using
  Picard-Fuchs operator of a one-parameter smoothing. Our method gives a possibility of computing the lattice of period integrals of a rigid double octic without any explicit knowledge of its geometric properties, employing only simple facts from the theory of Fuchsian equations and computations in MAPLE with a library for differential equations. As a surprising consequence we also get approximations of additional integrals related to a singular (nodal) model of the considered Calabi-Yau threefold.
\end{abstract}

\section*{Introduction} 

In this paper by a \emph{double octic} we understand a Calabi-Yau
threefold obtained as a resolution of singularities of a double cover
of $\mathbb{P}^3$ branched along a sum of eight planes in
specific configurations. There exist eleven rigid double octic
Calabi-Yau threefolds defined over $\mathbb Q$; they are
modular with explicitly given modular form (for details see
\cite{Meyer}).

If $X$ is a rigid Calabi-Yau threefold, then for a fixed complex
volume form \linebreak $\omega\in H^{3,0}(X)$ period integrals form a
lattice
 \begin{eqnarray*}
&&\Lambda:=\left\{\int_{\gamma}\omega:\;\gamma\in H_{3}(X,\mathbb{Z})\right\} \subset \mathbb{C}.
\end{eqnarray*}
Cynk and van Straten \cite{periods} used an explicit numerical integration over
three-cycles defined by real polyhedral cells to compute approximations of
period integrals of eleven rigid double octic Calabi-Yau
threefolds. These computations gave numerical evidences that the period
integrals are proportional to the special values of the $L\textnormal{-function}$ for
the corresponding modular form.

We start with the observation that birational models of double octics
appear as singular elements at conifold points of one-parameter
families of manifolds $\{X_t\}_{t\in\mathbb{C}}$, whose general
element is a Calabi-Yau threefold with $h^{1,2}(X)=1$. With each of
these families we can associate a holomorphic solution of certain differential equation of Fuchsian type. 
 
Using basic properties of the monodromy group of a Fuchsian equation,
we were also able to obtain values proportional to the special values of
$L$-functions of respective modular forms by means of solutions' analytic
continuation along a loop starting near the conifold point and encircling
the point of maximal unipotent monodromy. 

Our method gives a possibility of computing the lattice of period integrals of a 
~rigid double octic without any explicit knowledge of its geometric properties, employing only simple facts from the theory of Fuchsian equations and computations in MAPLE with a library for differential equations. As a surprising consequence we also get approximations of additional integrals related to a singular (nodal) model of considered Calabi-Yau threefold.

Recently Ruddat and Siebert proposed a different method of computing period integrals using toric degenerations (see \cite{tropic}).

The first two sections of our paper are of preliminary nature -- we collect necessary information on double octic Calabi-Yau threefolds and Picard-Fuchs operator. Third section contains the description of our method for computing period integrals, and in the fourth section we present the implementation in MAPLE. The last section collects the results of our computation. In the Appendix, for the reader's convenience, we list the Picard-Fuchs operators of one-parameter families of double octic Calabi-Yau threefolds with a MUM point and a conifold point.

This paper is an extended version of my bachelor thesis written at the Jagiellonian University under the supervision of prof. S\l{}awomir Cynk.

\section{Double octic Calabi-Yau threefolds}

In this paper we shall be concerned with Calabi-Yau threefolds
constructed as crepant resolutions of double covers of the
projective space $\mathbb{P}^{3}$ branched along an arrangement of eight planes
$S=P_{1}\cup\dots\cup P_{8}$. 
If the arrangement has no sixfold point and no fourfold line, the double cover admits a~projective crepant resolution of
singularities; we call the resulting Calabi-Yau threefold  a~\emph{double
  octic}. 

A Calabi-Yau threefold $X$ is called \emph{rigid} if it admits no infinitesimal
deformations or equivalently if $h^{1,2}(X)=0$. C. Meyer in \cite{Meyer}
gave a list of eleven rigid double octic Calabi-Yau threefolds defined
over $\mathbb Q$ and 63 examples of one-parameter families; the numbering of  arrangements in our paper follows the one given there.

In tab.~1
we give the equations of eleven arrangements of eight planes which result in a~rigid resolution. For details on
double octics we refer the reader to the monograph~\cite{Meyer}. 

\begin{table}[h]
  \centering
  \[%\scriptsize
	\begin{array}{|c|c|}
	\hline\rule[-0.5mm]{0mm}{4mm}
	\textup{Arrangement}&\textup{Equation} \\
	\hline
	1&xyzu(x + y)(y + z)(z + u)(u + x)\\
	\hline
	3&xyzu(x + y)(y + z)(y -u)(x - y -z + u)\\
	\hline
	19&xyzu(x + y)(y + z)(x - z - u)(x + y + z - u)\\
	\hline
	32&xyzu(x + y)(y + z)(x - y - z - u)(x + y - z + u)\\
	\hline
	69&xyzu(x + y)(x - y + z)(x - y - u)(x + y - z - u)\\
	\hline
	93&xyzu(x + y)(x - y + z)(y - z - u)(x + z - u)\\
	\hline
	238&xyzu(x + y + z - u)(x + y -z + u)(x - y + z + u)(-x + y + z + u)\\
	\hline
	239&xyzu(x + y + z)(x + y + u)(x + z + u)(y + z + u)\\
	\hline
	240&xyzu(x + y + z)(x + y - z + u)(x - y + z + u)(x - y - z - u)\\
	\hline
	241&xyzu(x + y + z + u)(x + y - z - u)(y - z + u)(x + z - u)\\
	\hline
	245&xyzu(x + y + z)(y + z + u)(x - y - u)(x - y + z + u)\\
	\hline
	\end{array}
	\] 
  \caption{Rigid double octic Calabi-Yau threefolds}
\end{table}
	
By the modularity theorem (cf. \cite{modular}) every rigid Calabi-Yau
threefold defined over $\mathbb Q$ is modular, i.e. its $L$-series is equal
to an $L$-series  $L(f,s)$ of a modular form  $f$ of weight four for
some $\Gamma_{0}(N)$. In the table below we give the
modular forms corresponding to rigid arrangements (we again adopt the notation from \cite{Meyer}):
\begin{table}[h]
\centering	
\[
  \begin{array}{|r|l|l|}
    \hline
    \textup{Form}&\textup{q-series expansion}&\textup{Arrangements}\\
    \hline
    6/1&q - 2q^2 - 3q^3 + 4q^4 + 6q^5 + 6q^6 - 16q^7 + O(q^{8})    &240, 245\\
    8/1&q - 4q^3 - 2q^5 + 24q^7 - 11q^9 - 44q^{11} + O(q^{12})   &1, 32, 69, 93, 238, 241\\
    12/1&q + 3q^3 - 18q^5 + 8q^7 + 9q^9 + 36q^{11} + O(q^{12}) &239\\
    32/1&q + 22q^5 - 27q^9 + O(q^{12})   &19\\
    32/2&q + 8q^3 - 10q^5 + 16q^7 + 37q^9 - 40q^{11} + O(q^{12})&3\\
    \hline
  \end{array}
\]
\caption{Modular forms for rigid double octics}
\end{table}

For our purposes it is crucial that every rigid double octic is birational to a~special member $X_{t_{0}}$ of a one-parameter family $X_{t}$; all special elements of one-parameter
families of double octics are given in \cite{CynkKocel}.  Taking the
resolution of the general member of a family of branched double covers, we get a
partial resolution of the special element corresponding to a rigid arrangement. This gives a special form of \emph{geometric transition}.

\section{Picard-Fuchs operator of a one-parameter family}
 Let $X$ be a Calabi-Yau manifold with $h^{1,2}(X)=1$. By the
 Bogomolov-Tian-Todorov theorem $X$ has a one-dimensional deformation space 
 $\{X_t\}_{t\in V}$, where $V$ is a neighbourhood of $0$ in $\mathbb
 C$ and $X_0=X$. The family $\{X_t\}_{t\in V}$ is locally trivial, i.e. it
 is locally diffeomorphic to $X_0\times \Delta$.
 
 Let $\omega_t$ be the complex volume form on $X_t$, depending
 holomorphically on $t$. If we fix a 3-cycle \linebreak $\gamma_{0}\in
 H_3(X_{0},\mathbb{Z})$ on $X_{0}$, by local triviality we can
 extend it to $\gamma_t \in H_3(X_{t},\mathbb{Z})$ for all $t\in V$. The function
 $y(t):= \int_{\gamma_t}\omega_t$ is called the
 \textit{period function}.
 
Period function of a one-parameter family of Calabi-Yau threefolds
over a complement of a finite set in $\mathbb {P}^{1}$
satisfies a differential equation, called \emph{the Picard-Fuchs equation} of this family. It is a linear
differential equation of order four
 \begin{equation} \label{eq:1}
   y^{(4)}+p_{1}y^{(3)}+p_{2}y''+p_{3}y'+p_{4}y=0,
 \end{equation}
with rational coefficients $p_i \in
\mathbb{C}(X)$. We can also write the Picard-Fuchs operator in the form 
\begin{equation} \label{eq:2}
  \Theta^4+q_{1}\Theta^{3}+q_{2}\Theta^2+q_{3}\Theta+q_4=0, \end{equation} 
where $\Theta=t\cdot\frac{d}{dt}$ denotes the logarithmic derivation, $q_i \in
\mathbb{C}(X)$.

\smallskip

The Picard-Fuchs operator of a one-parameter family of Calabi-Yau
threefolds is \emph{Fuchsian}, i.e. it has only regular singular
points. Equivalently the functions $q_{i}$ are in fact polynomials. More detailed information on Picard-Fuchs operators of one-parameter
families of Calabi-Yau threefolds can be found in \cite{Str}.
 
The local exponents of the Picard-Fuchs operator at a singular point $t_0$
corresponding to a singular Calabi-Yau threefold are equal to
$\{\alpha-\delta,\alpha,\alpha,\alpha+\delta\}$ with rational
$\alpha,\delta$,  $\delta>0$.
Such a regular singular point $t_0$ is called a
\emph{conifold point} (or a \emph{C point}). After a pullback we can assume the
local exponents to be $\{0,1,1,2\}$ and then there is a
fundamental system of solutions of the Picard-Fuchs equation of the form $$f_1,f_2,f_3+\log(t-t_{0})f_2,f_4,$$ where
 for $i=1,\dots,4$ the functions $f_i$ are holomorphic in the neighbourhood of the
 conifold point and $f_1, f_{2}, f_{3}$, and $f_{4}$ have orders
 0, 1, 1, and 2 respectively. 
 
 Our computation depends also on the existence of a point of
 \textit{maximal unipotent monodromy} (MUM) -- regular singular
 point $t_0$ with local exponents $\{0,0,0,0\}$. At a~MUM point we can
 choose a fundamental system of solutions of the 
 form $$f_1,f_2+\log(t-t_0)f_1,f_3+\log(t-t_0)f_2+\frac{1}{2}(\log(t-t_0))^2f_1,$$
 $$f_4
 +\log(t-t_0)f_3+\frac{1}{2}(\log(t-t_0))^2f_2+\frac{1}{6}(\log(t-t_0))^3f_1,$$ 
 where for $i=1,\dots,4$ the functions $f_i$ are holomorphic in its neighbourhood
 and $f_1$ has a non-zero free term.
  
  We shall also use notation \textit{MUM+a point}, resp. \textit{C+a point},
 $a\in\mathbb{Q}$, for a point with local exponents $\{a,a,a,a\}$,
 resp. $\{a,a+1,a+1,a+2\}$, and an appropriate fundamental system of
 solutions. Similarly,  $aC$ \textit{point} denotes a point
 with local exponents $\{0,a,a,2a\}$. 

 A solution $f$ of the Picard-Fuchs operator at point $t_{0}$ can be
 continued along  any path $\gamma : [0,1]\longrightarrow \mathbb {P}^{1}\setminus\Sigma$
 with $\gamma(0)=t_{0}$, omitting the set of singular points $\Sigma$, and
 produces a solution $T_{\gamma}(f)$ at the point $\gamma(1)$. Moreover, the
 continuations $T_{\gamma}(f)$ along homotopic paths are equal.
 
 Continuation along a loop  based at any point $t\not\in\Sigma$ gives a linear
transformation of the space of solutions in a neighbourhood of this
point. If $t_{0}\in\Sigma$ is a singular point and we take a small
disc $\Delta$ centered at $t_{0}$ that does not contain any other
singularities, the continuation along the boundary of $\Delta$ defines the
local monodromy operator $T_{t_{0}}$.

 \section{Description of the method}
 
 We shall consider a family $X_{t}$ of double octic Calabi-Yau
 threefolds such that a~general member of this family satisfies
 $h^{1,2}(X_t)=1$, $t_{0}$ is
 a conifold point and there is a MUM point, which we may assume to be~$0$. Moreover assume that $X_{t_{0}}$ is a~singular variety that
 resolves to a~rigid Calabi-Yau threefold $\overline{X}_{t_0}$.
 
 The computations in \cite{periods} yield numerical evidences that
 real and imaginary period integrals of $\overline{X}_{t_0}$ are integral multiples of
 $\pi^{2}L(f,1)$ and $\pi L(f,2)$, where $L(f,1)$ and $L(f,2)$ are the
 special values of the $L$-function of the modular form corresponding to
 $\overline{X}_{t_0}$.

 We shall present a method which in principle allows one to compute an arbitrarily precise numerical approximation of periods of $\overline{X}_{t_0}$. In our computations
 we shall not use any geometric properties of the one-parameter family, in
 particular we shall not use the fact that $X_{t}$ is a double
 octic. In fact, we will only use the Picard-Fuchs operator of the
 family $X_{t}$ computed in \cite{prep}. Due to this fact, we have refrained from listing the equations of one-parameter families and in the Appendix we present only their associated Picard-Fuchs operators.
 
 Let us recall that in our situation the element of the family of Calabi-Yau
 threefolds $X_{t}$ is defined by a resolution of a double covering of
 $\mathbb{P}^{3}$ branched along an arrangement of eight planes
 $S_{t}=P_{1}^{t}\cup\dots\cup P^{t}_{8}$.
 Special elements $X_{t_{0}}$ of this family correspond
 to a choice of four indices $1\le i_{1}<i_{2}<i_{3}<i_{4}\le 8$ such
 that the planes $P_{i_{1}}^{t},P_{i_{2}}^{t},P_{i_{3}}^{t},P_{i_{4}}^{t}$ are in
 general position for generic  $t$, but
 $P_{i_{1}}^{t_{0}},P_{i_{2}}^{t_{0}},
 P_{i_{3}}^{t_{0}},P_{i_{4}}^{t_{0}}$ intersect.
 The resolution of singularities of a generic element of a family of double covers yields also a partial resolution of the double cover at $t_{0}$.
 
 Assume that the intersection of the planes $P_{i_{1}}^{t_{0}},P_{i_{2}}^{t_{0}},
 P_{i_{3}}^{t_{0}},P_{i_{4}}^{t_{0}}$ is a fourfold point of
 $S_{t_{0}}$ which is not contained in a triple line of $S_{t_{0}}$
 (we call such a singular point $p_{4}^{0}$). Then the degeneration of
 $S_{t}$ at $t_{0}$ is given by a~vanishing tetrahedron, which gives a
 vanishing cycle in $X_{t}$. The singular element $X_{t_{0}}$ has two
 ordinary double points (nodes) as its only singularities.
 
 If the special fiber is nodal, gluing a 4-cell along the vanishing
 cycle we get topological space homotopic to the special fiber. The
 same holds true for a small resolution with a 3-cell glued along the
 exceptional line.
 \begin{twierdzenie}[\mbox{\cite[Ch. II]{Werner}}]
 	Let $X_{t}, t\in\Delta$, be a family of projective varieties such
 	that
 	\begin{itemize}
 		\item $X_{t}$ is a (smooth) Calabi-Yau threefold with
 		$h^{1,2}(X_{t})=1$, for $t\not=0$,
 		\item $X_{0}$ is a nodal variety such that a small resolution
 		$\overline X_{0}$ is a rigid Calabi-Yau threefold. 
 	\end{itemize}
 	Then
 	$H_{3}(X_{0})\cong\mathbb Z^{3}\oplus \textit{torsions}$.
 \end{twierdzenie}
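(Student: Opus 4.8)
The plan is to study the birational small resolution $\pi\colon\overline X_0\to X_0$ and to transport homological information from the rigid threefold $\overline X_0$, whose third Betti number is determined by rigidity, to the nodal fibre $X_0$. Write $P=\{p_1,p_2\}\subset X_0$ for the two nodes and $E=\pi^{-1}(P)=\mathbb P^1_1\sqcup\mathbb P^1_2$ for the exceptional locus, so that $\pi$ restricts to a homeomorphism $\overline X_0\setminus E\xrightarrow{\ \sim\ }X_0\setminus P$. The whole argument rests on two inputs already recorded above: the description of a small resolution as gluing a cell along the exceptional line, and the rigidity of $\overline X_0$, namely $h^{1,2}(\overline X_0)=0$; together with $h^{3,0}=1$ this forces $b_3(\overline X_0)=2h^{3,0}+2h^{2,1}=2$, hence $H_3(\overline X_0)\cong\mathbb Z^2\oplus\mathrm{(torsion)}$.

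First I would identify $H_3(X_0)$ with a relative group over the resolution. Since $(\overline X_0,E)\to(X_0,P)$ is a relative homeomorphism of good pairs, excision gives $\pi_*\colon H_3(\overline X_0,E)\xrightarrow{\ \sim\ }H_3(X_0,P)$; and because $P$ is finite, the long exact sequence of $(X_0,P)$ yields $H_3(X_0)\cong H_3(X_0,P)$. Next I would feed $H_3(E)=0$, valid because $E$ is a disjoint union of curves, into the long exact sequence of the pair $(\overline X_0,E)$, obtaining the short exact sequence
\[
0\longrightarrow H_3(\overline X_0)\longrightarrow H_3(X_0)\longrightarrow \ker\bigl(i_*\colon H_2(E)\to H_2(\overline X_0)\bigr)\longrightarrow 0,
\]
where $i\colon E\hookrightarrow\overline X_0$ is the inclusion. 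Since $H_2(E)\cong\mathbb Z^2$ is generated by the two exceptional lines, $\ker i_*$ is free, so the sequence splits against $H_3(\overline X_0)\cong\mathbb Z^2\oplus\mathrm{(torsion)}$. This reduces the theorem to the single numerical claim that $\operatorname{rank}\ker i_*=1$.

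The main obstacle is precisely this rank computation, and it is the point where the geometry of the degeneration, rather than formal topology, must enter. I would argue that the two classes $[\mathbb P^1_1],[\mathbb P^1_2]$ satisfy exactly one independent homology relation in $\overline X_0$. On the one hand neither is null-homologous, since projectivity of $\overline X_0$ makes each pair positively with an ample divisor, so $\operatorname{im} i_*\neq 0$. On the other hand the two classes are homologous, because both nodes are produced by the \emph{single} vanishing tetrahedron collapsing at $t_0$; this common origin is what the passage before the statement encodes when it records one vanishing cycle but two ordinary double points. Consequently $\operatorname{im} i_*$ has rank $1$, $\ker i_*\cong\mathbb Z$, and the split sequence gives $H_3(X_0)\cong\mathbb Z^3\oplus\mathrm{(torsion)}$.

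As an independent check I would run the dual computation on the smooth fibre using the other recalled fact, that $X_0$ is homotopy equivalent to $X_t$ with a $4$-cell glued along each vanishing $3$-sphere. The long exact sequence of this attachment gives $H_3(X_0)\cong H_3(X_t)/\langle v_1,v_2\rangle$, where $v_1,v_2$ are the vanishing classes and $b_3(X_t)=2h^{3,0}+2h^{2,1}=4$ because $h^{1,2}(X_t)=1$. The same geometric input, a single vanishing tetrahedron, makes $v_1$ and $v_2$ span a rank-$1$ subgroup, so the quotient again has rank $3$. This matches the resolution count and confirms that the conifold defect contributes exactly one extra generator beyond the two periods of the rigid threefold $\overline X_0$.
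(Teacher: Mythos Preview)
The paper does not supply a proof; the result is quoted from Werner. Your homological framework---identifying $H_3(X_0)$ with $H_3(\overline X_0,E)$ via excision and the pair sequence, and then reducing everything to the single number $\operatorname{rank}\ker\bigl(i_*\colon H_2(E)\to H_2(\overline X_0)\bigr)$---is the standard one and is correctly set up.

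The gap lies in your computation of that rank. First, you assume exactly two nodes; the theorem does not. The remark about two ordinary double points just before the statement concerns the particular double-octic degenerations under study, not the hypotheses of the theorem, which allow any number $n$ of nodes. Second, and more seriously, even in the two-node case your claim that $[\mathbb P^1_1]=[\mathbb P^1_2]$ in $H_2(\overline X_0)$ is asserted rather than proved: that the two nodes share a ``common origin'' in one vanishing tetrahedron is specific double-octic geometry and does not by itself exhibit a $3$-chain in $\overline X_0$ with boundary $\mathbb P^1_1-\mathbb P^1_2$. The parallel assertion on the smoothing side, that the vanishing classes span a rank-$1$ subgroup of $H_3(X_t)$, is the same unproved statement in dual form.

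The argument that actually closes the gap uses only the Hodge-theoretic hypotheses. With $n$ nodes one has $\chi(\overline X_0)=\chi(X_t)+2n$ (collapse $n$ copies of $S^3$, then insert $n$ copies of $\mathbb P^1$); since $\chi=2(h^{1,1}-h^{2,1})$ for a Calabi--Yau threefold and $h^{2,1}(X_t)=1$, $h^{2,1}(\overline X_0)=0$, this yields $b_2(\overline X_0)-b_2(X_t)=n-1$. Attaching $4$-cells along the vanishing spheres leaves $H_2$ untouched, so $b_2(X_0)=b_2(X_t)$, and the exact sequence $H_2(E)\to H_2(\overline X_0)\to H_2(X_0)\to 0$ then forces $\operatorname{rank}\operatorname{im} i_*=n-1$, hence $\operatorname{rank}\ker i_*=1$ for every $n$. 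Plugging this into your split sequence gives $H_3(X_0)\cong\mathbb Z^3\oplus(\text{torsion})$; this is essentially Werner's defect computation.
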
 
 
 As an integral over a torsion cycle vanishes, we will only consider homology groups modulo torsions. Denoting by
 $\overline {X}_{t_{0}}$ a (projective) small resolution of $X_{t_{0}}$ we get
 \[H_{3}(X_{t},\Z)\cong \Z^{4}, \quad H_{3}(X_{t_{0}},\Z)\cong \Z^{3},\quad
 H_{3}(\overline{X}_{t_{0}},\Z)\cong \Z^{2}.\] 
 The group $H_{3}(X_{t},\Z)$ is spanned by $H_{3}(X_{t_{0}},\Z)$ and
 the vanishing cycle. From considerations in \cite{periods} it follows that the map
 \[H_{3}(\overline{X}_{t_{0}},\Z)\longrightarrow H_{3}(X_{t_{0}}\Z)\]
 is injective. Moreover, a polyhedral 3-cycle belongs to the image of
 this map iff it is locally symmetric at the $p_{4}^{0}$ point.
 Consequently, the integrals of the 3-form
 \[\omega_{t_{0}}:=\frac{dx\wedge\ dy\wedge dz}{\sqrt{F_{t_{0}}(x,y,z)}},\]
 where $F_{t_{0}}(x,y,z)$ is an equation of the octic arrangement $S_{t_{0}}$
 in an affine chart, over polyhedral 3-cycles in $X_{t_{0}}$ generate a
 group of rank at most 3 in $\mathbb C$, while  integrals over polyhedral 3-cycles
 in $H_{3}(\overline{X}_{t_{0}})$ generate~a rank two subgroup
 (commensurable with the full periods lattice of $\overline
 {X}_{t_{0}}$).
 
 Since we will consider only those integrals 
 the computations in \cite{periods} take into account, 
 in the table below we list the generators of integrals over polyhedrals 3-cycles on a~singular double cover of a rigid arrangement (since we impose no symmetry condition on the cycles at the $p^0_4$ point, we can get more than three independent integrals): 
 
 \bigskip
 \begin{center}
 \begin{tabular}{r|rr|rr}
 	Arr. &\multicolumn{2}{l|}{Real integrals}&
 	\multicolumn{2}{l}{Imaginary integrals}\\\hline
 	3&14.303841078& 18.695683053&
 	41.413458745i\\
 	19&12.3280533145& 19.3301891966&
 	12.3280533145i& 19.3301891966i\\
 	32&11.13352966& 16.85672240&
 	17.34237466i\\
 	69&11.13352966& 16.85672240&
 	17.34237465i\\
 	93&8.42836120319& 11.1335296603&
 	17.3423746625i\\
 	239&13.1823084825& 17.6714531944&
 	11.7425210928i\\
 	240&3.99263311132& 6.94406875218&
 	4.80390756451i& 6.9176905115i\\
 	245&3.99263311132& 6.94406875217&
 	5.38024923409i& 7.49403218155i
 	\\	
 \end{tabular}
 \end{center}
 \bigskip
 
 The integrals over polyhedral 3-cycles in $H_{3}({X}_{t_{0}})$ form a subgroup of rank at most 3 in the group generated by the entries of the above table corresponding to $\overline{X}_{t_0}$. Note that the group $H_{3}({X}_{t_{0}})$ depends not only on the rigid arrangement but also on the choice of the one-parameter family.

 Now recall that at the conifold  point $t_{0}$ we
 have a fundamental system of solutions of the
 form $$f_1,f_2,f_3+\log(t-t_{0})f_2,f_4.$$ 
 Up to multiplication by a constant, two elements of this basis are uniquely determined: $f_4$ as the unique element
 of order 2 and $f_2$ as the coefficient of the logarithmic
 term.
 
 For any solution $f$ of the differential equation near a
 conifold point $t_{0}$, the local monodromy around $t_{0}$ is of the form
 $T_{t_0}(f)=f+c\cdot f_{2}$, where $c$ is a constant.
 The solution $f_{2}$ is therefore proportional to
 $N_{t_{0}}f$, where
 $N_{t_{0}}=T_{t_{0}}-\operatorname{Id}$.   
 In our case where $X_{t_{0}}$ is a nodal variety, $f_{2}(t)$ equals the
 integral over the vanishing cycle on $X_t$.
 
 Now for a differential operator $\mathcal{P}$ with a MUM point at $0$ and a conifold point $t_0$ denote by $\mathcal{L}_{\mathcal{P},t_0}$ the additive group generated by $\{\operatorname{Re}(T_0^n(f_2)),\operatorname{Im}(T_0^n(f_2)): n\in\mathbb{N}\}$. Observe that in our case this will be a~subgroup of integrals over cycles in $H_3(X_{t_0})$ and therefore will have rank at most 3.
 
 \smallskip
 
 Assume that we have two different Picard-Fuchs operators $\mathcal{P}$ and $\mathcal{P}'$ with conifold points $t_0$ and $t_0'$ such that $\overline{X}_{t_0}\cong\overline{X}_{t_0'}$. It follows from the considerations above that if $\mathcal{L}_{\mathcal{P},t_0}$ and $\mathcal{L}_{\mathcal{P'},t_0'}$ are of rank 3 and for some $\alpha\in\mathbb{C}$ the intersection $\mathcal{L}_{\mathcal{P},t_0}\cap\alpha\mathcal{L}_{\mathcal{P'},t_0'}$ is a lattice, this lattice is commensurable with the lattice of period integrals of $\overline{X}_{t_0}$.
 
 On the other hand, in all considered cases if $\mathcal{L}_{\mathcal{P},t_0}$ already is a lattice, it is (numerically) commensurable with the lattice of period integrals.
 
 \section{Computing the periods}
 
 We now describe the implementation of the above general idea. Using MAPLE's procedure \texttt{formal\_sol} we are able to
 determine the solution $f_2$ in a neighbourhood of the conifold point $t_0$. We then  choose a~polygon chain $L=(a_0,\dots,a_{n+1})$ such that
 $a_0=a_{n+1}=t_0+\epsilon$, $|\epsilon|\ll1$, and the winding number of $L$ around every singular point of
 the equation -- except for the  MUM point at $0$ -- equals 0 and the
 winding number around $0$ equals 1. 
 
 Let $g_0,g_1,g_2,g_3$ be a fundamental system of solutions at the point $a_{i+1}$,
 again computed with the MAPLE's \texttt{formal\_sol} procedure. Note that in
 every point of the curve $L$ we have a fundamental system of
 solutions of orders 0, 1, 2, 3. Therefore given a
 solution $g$ in a neighbourhood of $a_i$ whose disc of convergence
 has non-empty intersection with the discs of convergence of all
 solutions at $a_{i+1}$, we can fix a point $c_{i}$ in this intersection
 and consider the system of equations 
 
 $$\begin{bmatrix}
 g(c_i)\\g'(c_i)\\g''(c_i)\\g'''(c_i)
 \end{bmatrix}=\begin{bmatrix}
 X_1g_0(c_i)+X_2g_1(c_i)+X_3g_2(c_i)+X_4g_3(c_i)\\
 X_1g_0'(c_i)+X_2g_1'(c_i)+X_3g_2'(c_i)+X_4g_3'(c_i)\\
 X_1g_0''(c_i)+X_2g_1(c_i)+X_3g_2''(c_i)+X_4g_3''(c_i)\\
 X_1g_0'''(c_i)+X_2g_1'''(c_i)+X_3g_2'''(c_i)+X_4g_3'''(c_i)\\
 \end{bmatrix}.$$
 This gives us representation of $g$ in the basis $g_0,g_1,g_2,g_3$
 and we can continue it analytically to the point $a_{i+1}$
 (cf. \cite{vEvS}).
 
 In our application of this method, we begin with the solution $f_2$ at a point close to the conifold point $t_0$. Repeating the aforementioned computation for all vertices of $L$,
 we continue the solution $f_2$ around the point $0$ and obtain a new
 solution $\tilde{f_2}$ at $t_0+\epsilon$. In our computations we always took
 $c_i=\frac{1}{2}(a_i+a_{i+1})$, therefore while choosing vertices of
 the polyline it was important to check that $c_i$ is sufficiently
 close to the centers of the discs of convergence at $a_i$ and
 $a_{i+1}$ to avoid potential problems with the rate of convergence.
 
 When this procedure is completed, we end up with a new solution
 $\tilde{f_2}:=T_0(f_2)$. As our goal is to determine $\mathcal{L}_{\mathcal{P},t_0}$, we repeat the procedure now starting with $\tilde{f_2}$ instead of $f_2$, getting a new solution $\tilde{\tilde{f_2}}$ and so on. Then we use the following:

 \begin{twierdzenie}\label{recursion}
 	Let $W_{n}:=T_0^n(f_2)(t_0)$, $n\in\mathbb N$. The sequence
 	satisfies the following recursion
 	\[W_{n+4}=4W_{n+3}-6W_{n+2}+4W_{n+1}-W_{n}.\]
 \end{twierdzenie}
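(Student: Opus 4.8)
The plan is to recognise that the characteristic polynomial of the proposed recursion is
$$
x^{4}-4x^{3}+6x^{2}-4x+1=(x-1)^{4},
$$
so that the statement is equivalent to the single operator identity $(T_{0}-\operatorname{Id})^{4}=0$ on the four–dimensional space of solutions, read off along the sequence $f_{2},T_{0}f_{2},T_{0}^{2}f_{2},\dots$ and evaluated at $t_{0}$. Thus the entire claim reduces to the standard fact that the monodromy $T_{0}$ around the MUM point $0$ is unipotent of nilpotency index at most four.

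First I would establish this unipotency directly from the fundamental system at the MUM point recalled in Section~2. Denoting that system by $u_{1},u_{2},u_{3},u_{4}$ (with single–valued holomorphic coefficients), continuation once around $0$ replaces $\log(t)$ by $\log(t)+2\pi i$ and leaves the holomorphic parts unchanged, so in the basis $(u_{1},u_{2},u_{3},u_{4})$ the operator $N:=T_{0}-\operatorname{Id}$ acts by $u_{1}\mapsto 0$, $u_{2}\mapsto 2\pi i\,u_{1}$, $u_{3}\mapsto 2\pi i\,u_{2}+\tfrac12(2\pi i)^{2}u_{1}$, $u_{4}\mapsto 2\pi i\,u_{3}+\cdots$. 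Hence $N$ is strictly upper triangular on a four–dimensional space, which gives $N^{4}=0$. Since the monodromy along the loop $L$ used in Section~3 (winding number $1$ about $0$ and $0$ about every other singular point) is conjugate, by transport of the base point to $t_{0}+\epsilon$, to this local monodromy at $0$, and since $N^{4}=0$ is invariant under conjugation, the same identity holds for $T_{0}$ acting on the solutions based near the conifold point.

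Next I would expand the identity by the binomial theorem,
$$
(T_{0}-\operatorname{Id})^{4}=T_{0}^{4}-4T_{0}^{3}+6T_{0}^{2}-4T_{0}+\operatorname{Id}=0,
$$
and apply it to the solution $T_{0}^{n}(f_{2})$ for arbitrary $n\in\mathbb N$; this is legitimate because $N^{4}=0$ is an identity of linear operators on the whole solution space and each $T_{0}^{n}f_{2}$ is again a solution. This yields
$$
T_{0}^{n+4}(f_{2})-4T_{0}^{n+3}(f_{2})+6T_{0}^{n+2}(f_{2})-4T_{0}^{n+1}(f_{2})+T_{0}^{n}(f_{2})=0
$$
as an identity of solutions. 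Evaluating at $t_{0}$ — a fixed linear functional on the solution space, well defined since at a conifold point the solutions extend continuously to $t_{0}$ — produces $W_{n+4}-4W_{n+3}+6W_{n+2}-4W_{n+1}+W_{n}=0$, which is the asserted recursion.

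The only delicate point, and the step I would treat as the main obstacle, is the bookkeeping of branches and base points: the iterates $T_{0}^{n}(f_{2})$ are produced by repeated analytic continuation of $f_{2}$ along the loop based at $t_{0}+\epsilon$, so I must verify that the operator identity and the evaluation functional are taken with respect to one and the same branch throughout, and that the conjugation moving the local MUM monodromy to this base point is applied consistently. Once this identification is fixed (which the construction of Section~3 guarantees, as every $T_{0}^{n}f_{2}$ is returned to a neighbourhood of $t_{0}$), evaluation at $t_{0}$ commutes with the linear operator identity and the recursion follows with no further computation.
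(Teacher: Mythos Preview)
Your proposal is correct and follows essentially the same route as the paper: both arguments reduce the recursion to the operator identity $(T_{0}-\operatorname{Id})^{4}=0$, which holds because the monodromy at a MUM point is unipotent with a single Jordan block of size four. The only cosmetic difference is that the paper phrases this via the characteristic polynomial $(T-1)^{4}$ and the Cayley--Hamilton theorem, whereas you read off the nilpotency of $N=T_{0}-\operatorname{Id}$ directly from the strictly upper-triangular action on the fundamental system; your extra care about base points and evaluation at $t_{0}$ is not in the paper but does no harm.
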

 \begin{proof}
 	The monodromy operator $T_{0}$ around a MUM point has a single
 	Jordan block with eigenvalue $1$:
 	\[\left(
 	\begin{array}{rrrr}
 	1&1&0&0\\0&1&1&0\\0&0&1&1\\0&0&0&1
 	\end{array}\right).
 	\]
 	Consequently the characteristic polynomial of $T_{0}$ equals
 	$\chi(T)=(T-1)^{4}$. By the Cayley-Hamilton theorem
 	\[T_{0}^{4}-4T_{0}^{3}+6T_{0}^{2}-4T_{0}+6\operatorname{Id}=0\]
 	and the theorem follows.
 \end{proof}
 
 We therefore see that all possible values of Re$(W_n)$ and Im$(W_n)$ are integral linear combinations of the first four and the generators for those values will generate the entire $\mathcal{L}_{\mathcal{P},t_0}$.
 
 Additional difficulty was posed by examples where instead of MUM and C
 points we encountered points of type MUM$+a$, C$+a$ or $\frac1m$C for some
 $a\in\mathbb{Q}$ and $m\in\mathbb{N}$. In two former cases it is relatively easy to change the local exponents such that we
 get the desired basis: indeed, to change local exponents at $0$ one only needs to represent the differential operator using the logarithmic derivative $\Theta$ and then substitute $\Theta \mapsto \Theta+a$. However normalizing basis at $\frac1m$C points would
 require us to use the substitution $(t-t_0) \mapsto (t-t_0)^{m}$,
 producing new regular singular points in the process and making it
 harder to find the correct polyline. Because of this inconvenience, we have decided to perform the computations for this type of singular points using their natural basis $(t-t_0)^{\frac{1}{m}} f_1,(t-t_0)^{\frac{1}{m}} f_2,(t-t_0)^{\frac{1}{m}}(f_3+\log(t-t_{0})f_2),(t-t_0)^{\frac{1}{m}}f_4$ and take $(t-t_0)^{\frac{1}{m}}f_2$ as the initial solution.
 
 \begin{example}
 We shall now present the details of computations in the simplest case of
 Arrangement 2. It has
 the equation $$(y+z)(x+y)(z+u)(tu+x)xyzu,$$ where $t$ is the
 parameter and $x,y,z,u$ are coordinates in
 $\mathbb{P}^3$. Picard-Fuchs operator for this
 family is $$\mathcal{P}_2:=\Theta^4-\frac{1}{16}t(2\Theta+1)^4.$$ 
 
 The equation $\mathcal{P}_2=0$ has three regular singular points; we present them
 in the following table, called the \textit{Riemann scheme},
 together with their local exponents: \[\left\{\begin{tabular}{*{3}c} 
 0& 1& $\infty$\\ 
 \hline
 0& 0& 1/2\\
 0& 1& 1/2\\
 0& 1& 1/2\\
 0& 2& 1/2\\
 \end{tabular}\right\}\]
 
 Since $1$ is a conifold point, $0$ is a MUM point and the third
 regular singular point is located at infinity, we take
 $$(1,\tfrac{3}{4},\tfrac{1}{2},\tfrac{1}{4},\tfrac{1}{4}+\tfrac{i}{4},\tfrac{i}{4},-\tfrac{1}{4}+\tfrac{i}{4},-\tfrac{1}{4},-\tfrac{1}{4}-\tfrac{i}{4},-\tfrac{i}{4},\tfrac{1}{4}-\tfrac{i}{4},\tfrac{1}{4},\tfrac{1}{2},\tfrac{3}{4},1)$$ as the vertices of the polyline $L$. 
 
 Working with the precision \texttt{Digits:=30} and with the
 parameter \texttt{'order'}$=100$ for the procedure
 \texttt{formal\_sol} from the \texttt{DEtools} library, we continue analytically the solution 
\begin{multline*}$$f_2=(t-1)+\frac{3}{4}(t-1)^2-\frac{29}{48}(t-1)^3+\frac{49}{96}(t-1)^4-\frac{2277}{5120}(t-1)^5+\frac{8107}{20480}(t-1)^6\\-\frac{205223}{573440}(t-1)^7+\frac{37551}{114688}(t-1)^8-\frac{11413801}{37748736}(t-1)^9+\dots$$\end{multline*} 
along $L$.
After the first encircling we got
  $$\tilde{f_2}(1)=-11.3440218793908710004979185926+1.93350327192382796832769889845\cdot10^{-23}i,$$
 after the second --
  $$\accentset{\approx}{f_2}(1)=-45.3760875175634840019915624157+28.1143988476259022087394753013i,$$
and so forth.
  
  It is then checked that the sequences $$\frac{\operatorname{Re}(T_0^n(f_2)(1))}{\operatorname{Re}(T_0(f_2)(1))} \ \ \textnormal{and}\ \  \frac{\operatorname{Im}(T_0^n(f_2)(1))}{\operatorname{Im}(T_0^2(f_2)(1))}$$
 for $n=1,2,3,4$ within precision of 20 digits read $(1,4,9,16)$ and $(0,1,4,10)$. Using Theorem \ref{recursion} we see that in this case $\mathcal{L}_{\mathcal{P}_2,t_0}$ is already a lattice.
 
 The resolution of singularities at the considered conifold point
 corresponds to the double octic of Arrangement 1 with the
 corresponding modular form 8/1. Its special $L$-values are 
 $$L(f,1)\approx 0.35450068373096471876555989149,$$ $$L(f,2)\approx 0.69003116312339752511910542021.$$
 
 We can sum up our computations by
 \begin{eqnarray*}
 &&\frac{\operatorname{Re}(\tilde{f_2}(1))}{L(f,1)}\approx-32.0000000000000000000000056066,\\
 &&\pi\cdot\frac{\operatorname{Im}(\accentset{\approx}{f_2}(1))}{L(f,2)}\approx127.999999999999999999997974854.
 \end{eqnarray*}
 \end{example}

\section{Results of computation}

 In total we have studied 49 conifold points
 appearing in 29 families of double octics.
 
 \smallskip
 
 In 40 cases the described method suggests that $\mathcal{L}_{\mathcal{P},t_0}$ is already a lattice commensurable with the period lattice of the corresponding rigid double octic. Similarly to the example above, in fact we get not only the commensurability of lattices but it also appears that the generators computed by the means of our method are multiples of $L(f,1)$ and $\frac{L(f,2)}{2\pi i}$. In some cases they are multiples of $\sqrt{2}L(f,1)$ and $\sqrt{2}\frac{L(f,2)}{2\pi i}$; the reason is that in order to get the twist of the modular form with the smallest possible level, the equation of the corresponding arrangement of planes was multiplied by 2 (cf. [2]).
 
 The following table presents the generators in those 40 cases:

 \def\lj{L(f,1)}
 \def\ld{\frac{L(f,2)}{2\pi i}}

 \def\arraystretch{1.45}
 \begin{longtable}{|c|c|c|c|c|c|}\hline
   Arr. &\parbox[t]{1.2cm} {confiold\\ point}&\parbox[t]{.8cm}
                                                 {rigid\\ Arr.}&
 \parbox[t]{1.2cm} {modular\\
form}&real period&imaginary period\\[5mm]
   \hline
   2&1&1&8/1&$32L(f,1)$&$256\ld$\\\hline
   8&-1&1&8/1&$64i\ld$&$64i\lj$\\\hline
   10&0&1&8/1&$64i\ld$&$32i\lj$\\\hline
   10&-1&1&8/1&$32\lj$&$64\ld$\\\hline
   16&-1&1&8/1&$32\lj$&$64\ld$\\\hline
   20&0&3&32/2&$4\lj$&$16\ld$\\\hline
   20&-1&1&8/1&$64i\ld$&$96i\lj$\\\hline
   36&-1&32&8/1&$128i\ld$&$128i\lj$\\\hline
   73&$-\tfrac12$&69&8/1&$32i\ld$&$32i\lj$\\\hline
   94&-1&1&8/1&$16\lj$&$128\ld$\\\hline
   95&$\infty$&3&32/2&$16i\ld$&$4i\lj$\\\hline
   99&$-\tfrac12$&19&32/1&$2\sqrt2\lj$&$64\sqrt2\ld$\\\hline
   99&$\infty$&19&32/1&$4\sqrt2\lj$&$32\sqrt2\ld$\\\hline
   144&$-\tfrac12$&19&32/1&$2\sqrt2\lj$&$64\sqrt2\ld$\\\hline
   154&0&1&8/1&$32\lj$&$64\ld$\\\hline
   154&$-\tfrac12$&32&8/1&$16\lj$&$32\ld$\\\hline
   199&0&1&8/1&$64i\ld$&$32i\lj$\\\hline
   199&-1&69&8/1&$64i\ld$&$32i\lj$\\\hline
   242&$-\tfrac12$&238&8/1&$16\lj$&$32\ld$\\\hline
   242&$\infty$&238&8/1&$64\lj$&$128\ld$\\\hline
   246&0&1&8/1&$320i\ld$&$64i\lj$\\\hline
   246&$-\tfrac12$&241&8/1&$256i\ld$&$256i\lj$\\\hline
   249&$-\tfrac12$&241&8/1&$128\lj$&$512\ld$\\\hline
   251&0&1&8/1&$64i\ld$&$64i\lj$\\\hline
   251&$-\tfrac12$&93&8/1&$8\sqrt2i\ld$&$16\sqrt2i\lj$\\\hline
   251&$\infty$&19&32/1&$4\sqrt2\lj$&$16\sqrt2\ld$\\\hline
   253&0&3&32/2&$8\lj$&$16\ld$\\\hline
   253&$-\tfrac12$&240&6/1&$5\lj$&$30\ld$\\\hline
   254&$-\tfrac12$&241&8/1&$8\lj$&$4\ld$\\\hline
   255&2&32&8/1&$32\sqrt2i\ld$&$32\sqrt2\lj$\\\hline
   256&-2&239&12/1&$32\lj$&$96\ld$\\\hline
   256&2&238&8/1&$32\lj$&$256\ld$\\\hline
   257&4&240&6/1&$200\lj$&$288\ld$\\\hline
   259&0&32&8/1&$32\lj$&$128\ld$\\\hline
   259&$\infty$&32&8/1&$32\lj$&$128\ld$\\\hline
   262&0&1&8/1&$320i\ld$&$80\lj$\\\hline
   265&4&69&8/1&$16\lj$&$64\ld$\\\hline
   268&-1/2&69&8/1&$160\sqrt2i\ld$&$32\sqrt2i\lj$\\\hline
   274&0&3&32/2&$32i\ld$&$8i\lj$\\\hline
   274&$\infty$&3&32/2&$32i\ld$&$8i\lj$\\\hline
\caption{Results with $\mathcal{L}_{\mathcal{P},t_0}$ of rank 2}
\label{tab:3}
 \end{longtable}

Notice that, except for the Arrangement 245, each double octic corresponding to a rigid arrangement of eight planes appears as a~resolution of singularities in at least one conifold point listed above. Therefore for those ten double octics described method seems to allow us to compute period integrals using the Picard-Fuchs operator of just one one-parameter family.

Now we list the generators for the remaining nine cases:

 \def\arraystretch{1.35}
 \begin{longtable}{|c|c|c|c|c|}
 \hline  \multirow{2}{*}{Arr.} &\parbox{1.2cm} {special}&\parbox[t]{.8cm}{rigid}&
 \parbox[t]{1.2cm} {modular}&real periods\\
\cline{5-5}   
&point&Arr.&form&imaginary periods\\
   \hline
\hline
   \multirow{3}{*}5&\multirow{3}{*}0&\multirow{3}{*}3&\multirow{3}{*}{32/2}&16.7842426348152009451903095600\\\cline{5-5}
&&&&3.78853747194184773010686231258i\\
        &&&&61.0738884585292464400038239965i\\\hline
    \multirow{3}{*}5&\multirow{3}{*}2&\multirow{3}{*}3&\multirow{3}{*}{32/2}&4.19606065870380023629757253576\\\cline{5-5}
        &&&&0.94713436798546193252671571987i\\
        &&&&3.90285969880676841968001329994i\\\hline
 \multirow{3}{*}{20}&\multirow{3}{*}{-2}&\multirow{3}{*}{19}&\multirow{3}{*}{32/1}&2.58310412634697457527735648888
   \\\cline{5-5}&&&&0.979278824715794481666000593885i\\
   &&&&4.45674355709313141111341743112i\\\hline
\multirow{3}{*}{95}&\multirow{3}{*}{$-\tfrac12$}&\multirow{3}{*}{93}&\multirow{3}{*}{8/1}&6.21246314816860397127669373665\\\cline{5-5}&&&&
2.99683078705084653614316487029i\\
&&&&34.0238543159967756814545903982i \\\hline
\multirow{3}{*}{244}&\multirow{3}{*}{$\tfrac12$}&\multirow{3}{*}{240}&\multirow{3}{*}{6/1}&6.26847094349121003359079636235\\\cline{5-5}&&&&
2.58823590805561845768157001028i\\&&&&32.0498374325403392826453731746i\\\hline
\multirow{3}{*}{244}&\multirow{3}{*}{2}&\multirow{3}{*}{240}&\multirow{3}{*}{6/1}&25.0738837739648401343633473698\\*
\cline{5-5}&&&&10.3529436322224738307260280020i\\*
&&&&128.199349730161357130578977414i\\* \hline
\multirow{3}{*}{253}&\multirow{3}{*}{-2}&\multirow{3}{*}{245}&\multirow{3}{*}{6/1}&8.26021099210331426827789806115\\\cline{5-5}&&&&
6.26847094349121003359079492495i\\&&&&7.96011334055139325749281017005i\\\hline
\multirow{3}{*}{274}&\multirow{3}{*}{$-\tfrac12$}&\multirow{3}{*}{245}&\multirow{3}{*}{6/1}&3.13423547174561288123848517920\\\cline{5-5}&&&&
0.839792675513409448977564062085i\\&&&&8.12249522907253404678014309610i\\\hline
\multirow{3}{*}{274}&\multirow{3}{*}{-2}&\multirow{3}{*}{245}&\multirow{3}{*}{6/1}&5.57197417218209438771001425150\\\cline{5-5}&&&&
1.49296475661811062877124575360i\\&&&&14.4399915143798181025349619989i\\\hline
\caption{Results with $\mathcal{L}_{\mathcal{P},t_0}$ of rank 3}
\label{tab:4}
 \end{longtable}
\vspace*{-8mm}
In each case the real values in $\mathcal{L}_{\mathcal{P},t_0}$ were multiples of the special value, while imaginary parts formed a~group of rank 2.

We see that the remaining case rigid double octic of Arr. 245 appears as a~resolution of a~special fiber in three cases and we get three rank 3 subgroups: $\mathcal{L}_{\mathcal{P}_{253},-2}$, $\mathcal{L}_{\mathcal{P}_{274},-\frac{1}{2}}$ and $\mathcal{L}_{\mathcal{P}_{274},-2}$. The intersection $\mathcal{L}_{\mathcal{P}_{253},-2}\cap i\mathcal{L}_{\mathcal{P}_{274},-\frac{1}{2}}$ has rank 2 and is generated by the multiples of the special values: $144\sqrt{2}i\frac{L(f,2)}{2\pi i}$ and $40\sqrt{2}iL(f,1)$, as to be expected.

This means that for all rigid double octics our method allows us to compute the lattice of period integrals with much higher precision than the methods of numerical integration used in \cite{periods}.

The results in Table \ref{tab:4} can be interpreted by a more detailed
analysis of the computations of period integrals carried out in 
\cite{periods} where the period integrals were computed with numerical
integration over polyhedral cells. Let us recall the table from the Section 3., listing those integrals:

\begin{center}
\begin{tabular}{r|rr|rr}
	Arr. &\multicolumn{2}{l|}{Real integrals}&
	\multicolumn{2}{l}{Imaginary integrals}\\\hline
	3&14.303841078& 18.695683053&
	41.413458745i\\
	19&12.3280533145& 19.3301891966&
	12.3280533145i& 19.3301891966i\\
	32&11.13352966& 16.85672240&
	17.34237466i\\
	69&11.13352966& 16.85672240&
	17.34237465i\\
	93&8.42836120319& 11.1335296603&
	17.3423746625i\\
	239&13.1823084825& 17.6714531944&
	11.7425210928i\\
	240&3.99263311132& 6.94406875218&
	4.80390756451i& 6.9176905115i\\
	245&3.99263311132& 6.94406875217&
	5.38024923409i& 7.49403218155i
	\\	
\end{tabular}
\end{center}
\medskip

It turns out that we have the following identities, relating unidentified values obtained by means of our method and the generators listed in Table \ref{tab:4}:

\begin{eqnarray*}
  &&\pi^2\times3.78853747194184773010686233752\approx 2\times18.695683053\\
&&  \pi^2\times61.0738884585292464400038231520\approx16\times14.303841078+20\times18.695683053\\
&&  2\pi^2\times0.947134367985461932526715642895\approx18.695683053\\
&&  \pi^2\times3.90285969880676841968003796681\approx 4\times14.303841078- 18.695683053\\
&&  2\pi^{2}\times0.979278824715794481666000593885\approx19.3301891966\\
&&  2\pi^{2}\times4.45674355709313141111341743112\approx2\times12.3280533145-19.3301891966\\
&&  \sqrt2\pi^{2}\times2.99683078705084653614316485878\approx8.42836120319+3\times11.1335296603\\
&&  \sqrt2\pi^{2}\times34.023854315996775681454590398\approx22\times8.42836120319+26\times11.1335296603\\
&&  \sqrt2\pi^2\times2.58823590805561845768157001028\approx-20\times4.80390756451+8\times6.9176905115\\ 
&&  \sqrt2\pi^2\times32.0498374325403392826453731746\approx-168\times4.80390756451+48\times6.9176905115\\
&&  \sqrt2\pi^2\times10.3529436322224738307263862794\approx-80\times4.80390756451+32\times6.9176905115\\
&&  \sqrt2\pi^2\times128.199349730161357130578977414\approx-672\times4.80390756451+192\times 6.9176905115\\
&&  \sqrt2\pi^2\times6.26847094349121003359079492495\approx8\times3.99263311132+8\times 6.94406875217\\ 
&&  \sqrt2\pi^2\times7.96011334055139325749281017005\approx16\times6.94406875217\\
&&\sqrt2\pi^2\times0.839792675513399595961176504110\approx-2\times5.38024923409+3\times 7.49403218155\\
&&\sqrt2\pi^2\times8.12249522907253404678014309610\approx-4\times5.38024923409+18\times 7.49403218155\\
&&9\sqrt2\pi^2\times1.49296475661811062877124579435\approx-32\times5.38024923409+48\times 7.49403218155\\ 
&&9\sqrt2\pi^2\times14.4399915143798181025349622191\approx-64\times5.38024923409+288\times7.49403218155
\end{eqnarray*}

Comparing the first two equalities, we get
\vspace{-3mm}

\begin{eqnarray*}
&&14.303841078\approx\frac{\pi^2}{16}\cdot(61.0738884585292464400038231520-10\cdot3.78853747194184773010686233752).
\end{eqnarray*}

In a similar way we can recover much better approximations of other integrals computed in \cite{periods}. As a surprising consequence we compute not only the period lattice of a rigid Calabi-Yau threefold but also the periods of the singular model $X_{t_0}$ of $\overline{X}_{t_0}$. These additional integrals seem to be crucial for better understanding of the transformation matrices between Frobenius basis at a MUM point and a conifold point.

\section*{Appendix:\\Picard-Fuchs operators of one-parameter families of double octics with a MUM point and a conifold point}
\parindent=0cm
\parskip=2mm
\bgroup
\scriptsize\parskip=3mm
%\footnotesize
%Arr. No.
\textbf{2}:
%\(\displaystyle  \left( y+z \right)  \left( x+y \right)  \left( z+u \right)  \left( tu+x \right) xyzu\)
%
\({\Theta}^{4}-1/16\,t \left( 2\,\Theta+1 \right) ^{4}\)

% Arr. No.
\textbf{8}:
%\(\displaystyle - \left( y+z \right)  \left( x+y \right)  \left( -z+u \right)  \left( -ty-tz+tu+x \right) xyzu\)
%
\({\Theta}^{4}+1/16\,t \left( 8\,{\Theta}^{2}+8\,\Theta+3 \right)  \left( 2\,\Theta+1 \right) ^{2}+1/16\,{t}^{2} \left( 2\,\Theta+3 \right) ^{2} \left( 2\,\Theta+1 \right) ^{2}\)

% Arr. No.
%\textbf{10}{\qquad}
%\(\displaystyle  \left( y+z \right)  \left( x+y \right)  \left( -z+u \right)  \left( ty+tz-x+u \right) xyzu\)
%
\(1/4\,\Theta\, \left( \Theta-1 \right)  \left( -1+2\,\Theta \right) ^{2}+1/2\,t{\Theta}^{2} \left( 4\,{\Theta}^{2}+1 \right) +1/16\,{t}^{2} \left( 2\,\Theta+1 \right) ^{4}\)

% Arr. No.
\textbf{16}:
%\(\displaystyle - \left( y+z \right)  \left( x+y \right)  \left( tz-y+u \right)  \left( tx-y+u \right) xyzu\)
%
\({\Theta}^{4}+1/4\,t \left( 2\,{\Theta}^{2}+2\,\Theta+1 \right)  \left( 2\,\Theta+1 \right) ^{2}+1/4\,{t}^{2} \left( 2\,\Theta+3 \right)  \left( 2\,\Theta+1 \right)  \left( \Theta+1 \right) ^{2}\)

% Arr. No.
\textbf{20}:
%\(\displaystyle - \left( y+z \right)  \left( x+y \right)  \left( x-z+u \right)  \left( tz-y+u \right) xyzu\)
%
\(1/4\,\Theta\, \left( \Theta-1 \right)  \left( -1+2\,\Theta \right) ^{2}+1/8\,t\Theta\, \left( 12\,{\Theta}^{3}+24\,{\Theta}^{2}-11\,\Theta+3 \right) +{t}^{2} \left( -1/4\,{\Theta}^{4}+11/2\,{\Theta}^{3}+{\frac {79\,{\Theta}^{2}}{16}}+{\frac {17\,\Theta}{16}}+{\frac{7}{32}} \right) +\)

\(+{t}^{3} \left( -{\frac {7\,{\Theta}^{4}}{8}}-5/4\,{\Theta}^{3}+{\frac {119\,{\Theta}^{2}}{32}}+{\frac {45\,\Theta}{32}}+{\frac{5}{16}} \right) +{t}^{4} \left( -3/2\,{\Theta}^{3}-{\frac {9\,{\Theta}^{2}}{8}}-3/8\,\Theta-{\frac{5}{128}} \right) +{\frac {{t}^{5} \left( 2\,\Theta+1 \right) ^{4}}{128}}\)

% Arr. No.
\textbf{36}:
%\(\displaystyle - \left( x+y \right)  \left( y-z+u \right)  \left( ty-tz-x-y+z-u \right)  \left( tz+x+y-z+u \right) xyzu\)
%
\({\Theta}^{4}+t \left( -2\,{\Theta}^{4}-{\Theta}^{2}-\Theta-1/4 \right) -2\,{t}^{2} \left( 2\,\Theta+1 \right)  \left( {\Theta}^{2}+\Theta+1 \right) +{t}^{3} \left( 2\,{\Theta}^{4}+8\,{\Theta}^{3}+13\,{\Theta}^{2}+9\,\Theta+9/4 \right) -{t}^{4} \left( \Theta+1 \right) ^{4}\)

% Arr. No.
\textbf{73}:
%\(\displaystyle  \left( -y+z+u \right)  \left( -x-y+z+u \right)  \left( -ty+tu+x \right)  \left( tz+tu+x+y \right) xyzu\)
%
\({\Theta}^{4}+1/4\,t \left( 2\,{\Theta}^{2}+2\,\Theta+1 \right)  \left( 10\,{\Theta}^{2}+10\,\Theta+3 \right) +{t}^{2} \left( 9\,{\Theta}^{4}+36\,{\Theta}^{3}+55\,{\Theta}^{2}+38\,\Theta+21/2 \right)+\)

\(+ {t}^{3} \left( 7\,{\Theta}^{4}+42\,{\Theta}^{3}+90\,{\Theta}^{2}+81\,\Theta+27 \right) +2\,{t}^{4} \left( \Theta+3 \right) ^{2} \left( \Theta+1 \right) ^{2}\)

% Arr. No.
\textbf{94}:
%\(\displaystyle - \left( x+y \right)  \left( -x-y-z+u \right)  \left( ty-x-z \right)  \left( -ty+tu+z \right) xyzu\)
%
\({\Theta}^{4}+t \left( 11\,{\Theta}^{4}-2\,{\Theta}^{3}+1/4\,{\Theta}^{2}+5/4\,\Theta+{\frac{5}{16}} \right) +{t}^{2} \left( 40\,{\Theta}^{4}+16\,{\Theta}^{3}+{\frac {85\,{\Theta}^{2}}{2}}+14\,\Theta+{\frac{27}{16}} \right) +\)

\(+ {t}^{3} \left( 55\,{\Theta}^{4}+162\,{\Theta}^{3}+149\,{\Theta}^{2}+87\,\Theta+{\frac{85}{4}} \right) +{t}^{4} \left( 29\,{\Theta}^{4}+256\,{\Theta}^{3}+402\,{\Theta}^{2}+268\,\Theta+{\frac{281}{4}} \right) +\)

\(+ {t}^{5} \left( -8\,{\Theta}^{4}+112\,{\Theta}^{3}+397\,{\Theta}^{2}+351\,\Theta+{\frac{437}{4}} \right) +{t}^{6} \left( -20\,{\Theta}^{4}-72\,{\Theta}^{3}+22\,{\Theta}^{2}+96\,\Theta+{\frac{179}{4}} \right) +\)

\(+ {t}^{7} \left( -4\,{\Theta}^{4}-56\,{\Theta}^{3}-112\,{\Theta}^{2}-84\,\Theta-22 \right) +4\,{t}^{8} \left( \Theta+1 \right) ^{4}\)

% Arr. No.
\textbf{95}:
%\(\displaystyle  \left( x+y \right)  \left( x+y-z+u \right)  \left( ty-tz-x \right)  \left( ty+tu-x+z \right) xyzu\)
%
\({\Theta}^{4}+t \left( {\frac {37\,{\Theta}^{4}}{5}}+{\frac {38\,{\Theta}^{3}}{5}}+{\frac {151\,{\Theta}^{2}}{20}}+{\frac {15\,\Theta}{4}}+3/4 \right) +{t}^{2} \left( {\frac {561\,{\Theta}^{4}}{25}}+{\frac {1128\,{\Theta}^{3}}{25}}+{\frac {1406\,{\Theta}^{2}}{25}}+{\frac {361\,\Theta}{10}}+{\frac{763}{80}} \right) +\)

\(+{t}^{3} \left( {\frac {179\,{\Theta}^{4}}{5}}+{\frac {534\,{\Theta}^{3}}{5}}+{\frac {15911\,{\Theta}^{2}}{100}}+{\frac {2361\,\Theta}{20}}+{\frac{699}{20}} \right) +{t}^{4} \left( {\frac {794\,{\Theta}^{4}}{25}}+{\frac {3148\,{\Theta}^{3}}{25}}+{\frac {10879\,{\Theta}^{2}}{50}}+{\frac {8961\,\Theta}{50}}+{\frac{1143}{20}} \right)+\)

\( +{\frac {3\,{t}^{5} \left( \Theta+1 \right)  \left( 124\,{\Theta}^{3}+492\,{\Theta}^{2}+719\,\Theta+366 \right) }{25}}+{\frac {18\,{t}^{6} \left( \Theta+2 \right)  \left( \Theta+1 \right)  \left( 2\,\Theta+3 \right) ^{2}}{25}}\)

% Arr. No.
\textbf{99}:
%\(\displaystyle  \left( x+y+z \right)  \left( -x-z+u \right)  \left( ty+tz-x \right)  \left( ty-tz+tu+x+y \right) xyzu\)
%
\({\Theta}^{4}+t \left( 11\,{\Theta}^{4}+10\,{\Theta}^{3}+10\,{\Theta}^{2}+5\,\Theta+1 \right) +{t}^{2} \left( 55\,{\Theta}^{4}+88\,{\Theta}^{3}+108\,{\Theta}^{2}+70\,\Theta+{\frac{295}{16}} \right) +\)

\(+{t}^{3} \left( 165\,{\Theta}^{4}+354\,{\Theta}^{3}+484\,{\Theta}^{2}+351\,\Theta+{\frac{1651}{16}} \right) +{t}^{4} \left( 328\,{\Theta}^{4}+860\,{\Theta}^{3}+1267\,{\Theta}^{2}+954\,\Theta+{\frac{1157}{4}} \right) +\)

\(+{t}^{5} \left( 448\,{\Theta}^{4}+1384\,{\Theta}^{3}+2189\,{\Theta}^{2}+1691\,\Theta+516 \right) +{t}^{6} \left( 420\,{\Theta}^{4}+1512\,{\Theta}^{3}+2564\,{\Theta}^{2}+2070\,\Theta+{\frac{2589}{4}} \right) +\)

\(+{t}^{7} \left( 260\,{\Theta}^{4}+1096\,{\Theta}^{3}+1992\,{\Theta}^{2}+1696\,\Theta+{\frac{2207}{4}} \right) +12\,{t}^{8} \left( \Theta+1 \right)  \left( 8\,{\Theta}^{3}+32\,{\Theta}^{2}+47\,\Theta+24 \right) +\)

\(+4\,{t}^{9} \left( \Theta+2 \right)  \left( \Theta+1 \right)  \left( 2\,\Theta+3 \right) ^{2}\)

% Arr. No.
\textbf{144}:
%\(\displaystyle  \left( x-y+z+u \right)  \left( ty+x+z \right)  \left( ty+z+u \right)  \left( tx-ty+tu-z \right) xyzu\)
%
\({\Theta}^{4}+t \left( 11\,{\Theta}^{4}+10\,{\Theta}^{3}+10\,{\Theta}^{2}+5\,\Theta+1 \right) +{t}^{2} \left( 55\,{\Theta}^{4}+88\,{\Theta}^{3}+108\,{\Theta}^{2}+70\,\Theta+{\frac{295}{16}} \right) +\)

\(+{t}^{3} \left( 165\,{\Theta}^{4}+354\,{\Theta}^{3}+484\,{\Theta}^{2}+351\,\Theta+{\frac{1651}{16}} \right) +{t}^{4} \left( 328\,{\Theta}^{4}+860\,{\Theta}^{3}+1267\,{\Theta}^{2}+954\,\Theta+{\frac{1157}{4}} \right) +\)

\(+{t}^{5} \left( 448\,{\Theta}^{4}+1384\,{\Theta}^{3}+2189\,{\Theta}^{2}+1691\,\Theta+516 \right) +{t}^{6} \left( 420\,{\Theta}^{4}+1512\,{\Theta}^{3}+2564\,{\Theta}^{2}+2070\,\Theta+{\frac{2589}{4}} \right) +\)

\(+{t}^{7} \left( 260\,{\Theta}^{4}+1096\,{\Theta}^{3}+1992\,{\Theta}^{2}+1696\,\Theta+{\frac{2207}{4}} \right) +12\,{t}^{8} \left( \Theta+1 \right)  \left( 8\,{\Theta}^{3}+32\,{\Theta}^{2}+47\,\Theta+24 \right) +\)

\(+4\,{t}^{9} \left( \Theta+2 \right)  \left( \Theta+1 \right)  \left( 2\,\Theta+3 \right) ^{2}\)

% Arr. No.
\textbf{154}:
%\(\displaystyle  \left( x+y+z \right)  \left( -x-y-z+u \right)  \left( tz-x+u \right)  \left( ty-tz+tu+x+y \right) xyzu\)
%
\(1/4\,\Theta\, \left( \Theta-1 \right)  \left( -1+2\,\Theta \right) ^{2}+1/12\,t\Theta\, \left( 116\,{\Theta}^{3}-96\,{\Theta}^{2}+91\,\Theta-12 \right) +{t}^{2} \left( {\frac {355\,{\Theta}^{4}}{9}}+{\frac {110\,{\Theta}^{3}}{9}}+{\frac {1507\,{\Theta}^{2}}{36}}+{\frac {145\,\Theta}{12}}+3 \right) +\)

\(+{t}^{3} \left( {\frac {793\,{\Theta}^{4}}{9}}+{\frac {1132\,{\Theta}^{3}}{9}}+{\frac {6427\,{\Theta}^{2}}{36}}+{\frac {599\,\Theta}{6}}+{\frac{923}{36}} \right) +{t}^{4} \left( {\frac {1048\,{\Theta}^{4}}{9}}+296\,{\Theta}^{3}+427\,{\Theta}^{2}+294\,\Theta+{\frac{739}{9}} \right) +\)

\(+{t}^{5} \left( {\frac {820\,{\Theta}^{4}}{9}}+{\frac {3008\,{\Theta}^{3}}{9}}+{\frac {4924\,{\Theta}^{2}}{9}}+424\,\Theta+{\frac{1144}{9}} \right) +{\frac {32\,{t}^{6} \left( 11\,{\Theta}^{2}+31\,\Theta+27 \right)  \left( \Theta+1 \right) ^{2}}{9}}+{\frac {64\,{t}^{7} \left( \Theta+2 \right) ^{2} \left( \Theta+1 \right) ^{2}}{9}}\)

% Arr. No.
\textbf{199}:
%\(\displaystyle  \left( -x+z+u \right)  \left( -x+y+u \right)  \left( tx+y-z \right)  \left( ty+x-z \right) xyzu\)
%
\(1/4\,\Theta\, \left( \Theta-1 \right)  \left( -1+2\,\Theta \right) ^{2}+1/12\,t\Theta\, \left( 8\,{\Theta}^{3}-24\,{\Theta}^{2}+10\,\Theta-3 \right) -1/36\,{t}^{2} \left( \Theta+1 \right)  \left( 68\,{\Theta}^{3}+92\,{\Theta}^{2}+90\,\Theta+27 \right) +\)

\(+{t}^{3} \left( -4/3\,{\Theta}^{4}-4/3\,{\Theta}^{3}-{\frac {11\,{\Theta}^{2}}{9}}-{\frac {13\,\Theta}{36}}+1/36 \right) +{t}^{4} \left( {\frac {7\,{\Theta}^{4}}{9}}+6\,{\Theta}^{3}+{\frac {49\,{\Theta}^{2}}{4}}+21/2\,\Theta+{\frac{115}{36}} \right) +\)

\(+{t}^{5} \left( 2/3\,{\Theta}^{4}+10/3\,{\Theta}^{3}+11/2\,{\Theta}^{2}+{\frac {23\,\Theta}{6}}+{\frac{35}{36}} \right) +1/9\,{t}^{6} \left( \Theta+1 \right) ^{4}\)

% Arr. No.
\textbf{242}:
%\(\displaystyle - \left( x+y+z \right)  \left( -x-z+u \right)  \left( tx+ty+tu+x+y \right)  \left( ty-tz+tu+x+y \right) xyzu\)
%
\({\Theta}^{4}+t \left( 5\,{\Theta}^{4}+10\,{\Theta}^{3}+10\,{\Theta}^{2}+5\,\Theta+1 \right) +{t}^{2} \left( 9\,{\Theta}^{2}+18\,\Theta+14 \right)  \left( \Theta+1 \right) ^{2}+{t}^{3} \left( \Theta+2 \right)  \left( \Theta+1 \right)  \left( 7\,{\Theta}^{2}+21\,\Theta+18 \right) +\)

\(+2\,{t}^{4} \left( \Theta+3 \right)  \left( \Theta+1 \right)  \left( \Theta+2 \right) ^{2}\)

% Arr. No.
\textbf{246}:
%\(\displaystyle - \left( x+y+z \right)  \left( tz-x+u \right)  \left( ty-tz+tu+x+y \right)  \left( ty+x+y+z-u \right) xyzu\)
%
\(1/4\,\Theta\, \left( \Theta-1 \right)  \left( -1+2\,\Theta \right) ^{2}+1/4\,t{\Theta}^{2} \left( 20\,{\Theta}^{2}+11 \right) +{t}^{2} \left( 9\,{\Theta}^{4}+18\,{\Theta}^{3}+22\,{\Theta}^{2}+13\,\Theta+{\frac{51}{16}} \right) +\)

\(+1/16\,{t}^{3} \left( 4\,{\Theta}^{2}+8\,\Theta+7 \right)  \left( 28\,{\Theta}^{2}+56\,\Theta+29 \right) +1/8\,{t}^{4} \left( 2\,\Theta+3 \right) ^{4}\)

% Arr. No.
\textbf{249}:
%\(\displaystyle  \left( x+y+z \right)  \left( x+z+u \right)  \left( ty-tz+u \right)  \left( ty-tz+x+y+u \right) xyzu\)
%
\({\Theta}^{2} \left( \Theta-1 \right) ^{2}+t{\Theta}^{2} \left( 4\,{\Theta}^{2}+1 \right) +{t}^{2} \left( 5\,{\Theta}^{4}+10\,{\Theta}^{3}+10\,{\Theta}^{2}+5\,\Theta+1 \right) +2\,{t}^{3} \left( \Theta+1 \right) ^{4}\)

% Arr. No.
\textbf{251}:
%\(\displaystyle  \left( x+y+z \right)  \left( -x-z+u \right)  \left( ty-tz+tu+x+y \right)  \left( ty+tz-x+u \right) xyzu\)
%
\(1/4\,\Theta\, \left( \Theta-1 \right)  \left( -1+2\,\Theta \right) ^{2}+1/14\,t\Theta\, \left( 232\,{\Theta}^{3}-360\,{\Theta}^{2}+261\,\Theta-45 \right) +{t}^{2} \left( {\frac {5989\,{\Theta}^{4}}{49}}-{\frac {6526\,{\Theta}^{3}}{49}}+{\frac {12673\,{\Theta}^{2}}{98}}-{\frac {1321\,\Theta}{98}}+{\frac{69}{56}} \right) +\)

\(+{t}^{3} \left( {\frac {25873\,{\Theta}^{4}}{49}}-{\frac {2336\,{\Theta}^{3}}{7}}+{\frac {54315\,{\Theta}^{2}}{98}}+{\frac {115\,\Theta}{49}}+{\frac{11325}{784}} \right) +{t}^{4} \left( {\frac {72358\,{\Theta}^{4}}{49}}-{\frac {14880\,{\Theta}^{3}}{49}}+{\frac {44351\,{\Theta}^{2}}{28}}+{\frac {4785\,\Theta}{28}}+{\frac{45387}{784}} \right) +\)

\(+{t}^{5} \left( {\frac {136135\,{\Theta}^{4}}{49}}+{\frac {2768\,{\Theta}^{3}}{7}}+{\frac {141170\,{\Theta}^{2}}{49}}+{\frac {33581\,\Theta}{98}}+{\frac{19353}{784}} \right) +{t}^{6} \left( {\frac {171792\,{\Theta}^{4}}{49}}+{\frac {52596\,{\Theta}^{3}}{49}}+{\frac {118302\,{\Theta}^{2}}{49}}-{\frac {42657\,\Theta}{49}}-{\frac{4131}{7}} \right) +\)

\(+{t}^{7} \left( {\frac {19472\,{\Theta}^{4}}{7}}-{\frac {1128\,{\Theta}^{3}}{7}}-{\frac {16601\,{\Theta}^{2}}{7}}-{\frac {267159\,\Theta}{49}}-{\frac{470223}{196}} \right) +{t}^{8} \left( {\frac {49752\,{\Theta}^{4}}{49}}-{\frac {169128\,{\Theta}^{3}}{49}}-{\frac {492384\,{\Theta}^{2}}{49}}-{\frac {593742\,\Theta}{49}}-{\frac{471897}{98}} \right) +\)

\(+{t}^{9} \left( -{\frac {19296\,{\Theta}^{4}}{49}}-{\frac {283680\,{\Theta}^{3}}{49}}-{\frac {100188\,{\Theta}^{2}}{7}}-{\frac {747468\,\Theta}{49}}-{\frac{284463}{49}} \right) +{t}^{10} \left( -{\frac {34656\,{\Theta}^{4}}{49}}-{\frac {34560\,{\Theta}^{3}}{7}}-{\frac {563700\,{\Theta}^{2}}{49}}-{\frac {574668\,\Theta}{49}}-{\frac{213219}{49}} \right) +\)

\(+{t}^{11} \left( -{\frac {2736\,{\Theta}^{4}}{7}}-{\frac {117792\,{\Theta}^{3}}{49}}-{\frac {267408\,{\Theta}^{2}}{49}}-{\frac {265968\,\Theta}{49}}-{\frac{96687}{49}} \right) -{\frac {432\,{t}^{12} \left( \Theta+1 \right)  \left( 12\,{\Theta}^{3}+60\,{\Theta}^{2}+101\,\Theta+56 \right) }{49}}-{\frac {144\,{t}^{13} \left( \Theta+2 \right)  \left( \Theta+1 \right)  \left( 2\,\Theta+3 \right) ^{2}}{49}}\)

% Arr. No.
\textbf{253}:
%\(\displaystyle  \left( x+y+z \right)  \left( -x-z+u \right)  \left( tz-x-y+u \right)  \left( ty-tz+tu+x+y \right) xyzu\)
%
\(1/4\,\Theta\, \left( \Theta-1 \right)  \left( -1+2\,\Theta \right) ^{2}+1/40\,t\Theta\, \left( 492\,{\Theta}^{3}-456\,{\Theta}^{2}+379\,\Theta-57 \right) +{t}^{2} \left( {\frac {6731\,{\Theta}^{4}}{100}}+{\frac {83\,{\Theta}^{3}}{50}}+{\frac {25109\,{\Theta}^{2}}{400}}+{\frac {4989\,\Theta}{400}}+{\frac{553}{160}} \right) +\)

\(+{t}^{3} \left( {\frac {43629\,{\Theta}^{4}}{200}}+{\frac {18723\,{\Theta}^{3}}{100}}+{\frac {264889\,{\Theta}^{2}}{800}}+{\frac {124409\,\Theta}{800}}+{\frac{31623}{800}} \right) +{t}^{4} \left( {\frac {47001\,{\Theta}^{4}}{100}}+{\frac {37083\,{\Theta}^{3}}{50}}+{\frac {880637\,{\Theta}^{2}}{800}}+{\frac {268839\,\Theta}{400}}+{\frac{578301}{3200}} \right) +\)

\(+{t}^{5} \left( {\frac {28631\,{\Theta}^{4}}{40}}+{\frac {156673\,{\Theta}^{3}}{100}}+{\frac {1863431\,{\Theta}^{2}}{800}}+{\frac {1285873\,\Theta}{800}}+{\frac{90707}{200}} \right) +{t}^{6} \left( {\frac {159683\,{\Theta}^{4}}{200}}+{\frac {107837\,{\Theta}^{3}}{50}}+{\frac {2649937\,{\Theta}^{2}}{800}}+{\frac {975887\,\Theta}{400}}+{\frac{568541}{800}} \right) +\)

\(+{t}^{7} \left( {\frac {66527\,{\Theta}^{4}}{100}}+{\frac {52122\,{\Theta}^{3}}{25}}+{\frac {66313\,{\Theta}^{2}}{20}}+{\frac {505161\,\Theta}{200}}+{\frac{149699}{200}} \right) +{t}^{8} \left( {\frac {83369\,{\Theta}^{4}}{200}}+{\frac {36614\,{\Theta}^{3}}{25}}+{\frac {482423\,{\Theta}^{2}}{200}}+{\frac {187197\,\Theta}{100}}+{\frac{22373}{40}} \right) +\)

\(+{t}^{9} \left( {\frac {19509\,{\Theta}^{4}}{100}}+{\frac {37767\,{\Theta}^{3}}{50}}+{\frac {129321\,{\Theta}^{2}}{100}}+{\frac {51153\,\Theta}{50}}+{\frac{7713}{25}} \right) +{t}^{10} \left( {\frac {333\,{\Theta}^{4}}{5}}+{\frac {7068\,{\Theta}^{3}}{25}}+{\frac {25289\,{\Theta}^{2}}{50}}+{\frac {10299\,\Theta}{25}}+{\frac{6327}{50}} \right) +\)

\(+{t}^{11} \left( {\frac {394\,{\Theta}^{4}}{25}}+{\frac {1844\,{\Theta}^{3}}{25}}+{\frac {3464\,{\Theta}^{2}}{25}}+{\frac {2926\,\Theta}{25}}+{\frac{922}{25}} \right) +{\frac{2}{25}}\,{t}^{12} \left( 29\,{\Theta}^{2}+94\,\Theta+86 \right)  \left( \Theta+1 \right) ^{2}+{\frac {4\,{t}^{13} \left( \Theta+2 \right) ^{2} \left( \Theta+1 \right) ^{2}}{25}}\)

% Arr. No.
\textbf{254}:
%
%\(\displaystyle - \left( x+y+z+u \right)  \left( ty+tz-x \right)  \left( tz-y-u \right)  \left( tz+tu-x-y \right) xyzu\)
\({\Theta}^{4}+t \left( 10\,{\Theta}^{4}+14\,{\Theta}^{3}+{\frac{25}{2}}\,{\Theta}^{2}+11/2\,\Theta+{\frac{17}{16}} \right) +{t}^{2} \left( 62\,{\Theta}^{4}+74\,{\Theta}^{3}+104\,{\Theta}^{2}+{\frac {171\,\Theta}{2}}+{\frac{399}{16}} \right) +\)

\(+{t}^{3} \left( 293\,{\Theta}^{4}+276\,{\Theta}^{3}+{\frac {271\,{\Theta}^{2}}{2}}+228\,\Theta+{\frac{331}{4}} \right) +{t}^{4} \left( 1039\,{\Theta}^{4}+944\,{\Theta}^{3}+338\,{\Theta}^{2}-47\,\Theta-{\frac{1829}{16}} \right) +\)

\(+{t}^{5} \left( 2884\,{\Theta}^{4}+2098\,{\Theta}^{3}+{\frac {5937\,{\Theta}^{2}}{2}}+{\frac {275\,\Theta}{2}}-{\frac{4281}{8}} \right) +{t}^{6} \left( 6348\,{\Theta}^{4}+4278\,{\Theta}^{3}+9109\,{\Theta}^{2}+{\frac {6591\,\Theta}{2}}+72 \right) +\)

\(+{t}^{7} \left( 10465\,{\Theta}^{4}+10988\,{\Theta}^{3}+{\frac {36131\,{\Theta}^{2}}{2}}+10011\,\Theta+{\frac{37033}{16}} \right) +{t}^{8} \left( 12062\,{\Theta}^{4}+21584\,{\Theta}^{3}+30237\,{\Theta}^{2}+19844\,\Theta+{\frac{44823}{8}} \right) +\)

\(+{t}^{9} \left( 9196\,{\Theta}^{4}+25056\,{\Theta}^{3}+37002\,{\Theta}^{2}+27144\,\Theta+{\frac{32891}{4}} \right) +{t}^{10} \left( 4360\,{\Theta}^{4}+16352\,{\Theta}^{3}+27484\,{\Theta}^{2}+22392\,\Theta+{\frac{14529}{2}} \right) +\)

\(+{t}^{11} \left( 1152\,{\Theta}^{4}+5568\,{\Theta}^{3}+10816\,{\Theta}^{2}+9776\,\Theta+3416 \right) +8\,{t}^{12} \left( 2\,\Theta+3 \right) ^{4}\)

% Arr. No.
\textbf{255}:
%\(\displaystyle  \left( -x-y+2\,z+2\,u \right)  \left( tz+tu+y-2\,z \right)  \left( tz+tu+x+y \right)  \left( -tx-ty+2\,tz+2\,tu+2\,y-4\,z \right) xyzu\)
\({\Theta}^{4}+1/4\,t\Theta\, \left( 4\,{\Theta}^{3}-10\,{\Theta}^{2}-6\,\Theta-1 \right) +{t}^{2} \left( -{\frac {23\,{\Theta}^{4}}{16}}-{\frac {31\,{\Theta}^{3}}{8}}-{\frac {25\,{\Theta}^{2}}{8}}-{\frac {57\,\Theta}{16}}-{\frac{89}{64}} \right) +\)

\(+{t}^{3} \left( -{\frac {21\,{\Theta}^{4}}{32}}-{\frac {21\,{\Theta}^{3}}{8}}-{\frac {83\,{\Theta}^{2}}{16}}-{\frac {33\,\Theta}{32}}+{\frac{51}{128}} \right) +{t}^{4} \left( {\frac {59\,{\Theta}^{4}}{64}}+{\frac {41\,{\Theta}^{3}}{32}}+{\frac {279\,{\Theta}^{2}}{32}}+{\frac {923\,\Theta}{128}}+{\frac{2567}{1024}} \right) +\)

\(+{t}^{5} \left( -{\frac {11\,{\Theta}^{4}}{32}}+{\frac {91\,{\Theta}^{3}}{32}}+{\frac {3477\,{\Theta}^{2}}{512}}+{\frac {1907\,\Theta}{256}}+{\frac{1741}{512}} \right) +{t}^{6} \left( -{\frac {27\,{\Theta}^{4}}{128}}-{\frac {75\,{\Theta}^{3}}{64}}-{\frac {9\,{\Theta}^{2}}{16}}-{\frac {657\,\Theta}{512}}-{\frac{697}{1024}} \right) +\)

\(+{t}^{7} \left( {\frac {57\,{\Theta}^{4}}{256}}+{\frac {39\,{\Theta}^{3}}{128}}-{\frac {3085\,{\Theta}^{2}}{2048}}-{\frac {2795\,\Theta}{1024}}-{\frac{3089}{2048}} \right)+{t}^{8} \left( -{\frac {11\,{\Theta}^{4}}{512}}+{\frac {113\,{\Theta}^{3}}{256}}+{\frac {601\,{\Theta}^{2}}{2048}}+{\frac {9\,\Theta}{256}}-{\frac{1027}{8192}} \right) +\)

\(+{t}^{9} \left( -{\frac {13\,{\Theta}^{4}}{512}}-{\frac {57\,{\Theta}^{3}}{256}}-{\frac {2581\,{\Theta}^{2}}{8192}}-{\frac {717\,\Theta}{4096}}-{\frac{73}{8192}} \right) +{t}^{10} \left( {\frac {61\,{\Theta}^{4}}{4096}}-{\frac {11\,{\Theta}^{3}}{2048}}-{\frac {359\,{\Theta}^{2}}{4096}}-{\frac {1005\,\Theta}{8192}}-{\frac{205}{4096}} \right) +\)

\(+{t}^{11} \left( -{\frac {5\,{\Theta}^{4}}{8192}}+{\frac {5\,{\Theta}^{3}}{1024}}+{\frac {725\,{\Theta}^{2}}{32768}}+{\frac {475\,\Theta}{16384}}+{\frac{205}{16384}} \right) -{\frac {25\,{t}^{12} \left( 2\,\Theta+3 \right) ^{4}}{262144}}\)

% Arr. No.
\textbf{256}:
%\(\displaystyle - \left( x+y+2\,z \right)  \left( -tz+tu+y \right)  \left( -tz+tu+x+y+2\,z \right)  \left( -tx-ty-2\,tz+2\,tu+2\,y \right) xyzu\)
%
\(1/16\, \left( 2\,\Theta+1 \right) ^{4}+t \left( -21/2\,{\Theta}^{4}-12\,{\Theta}^{3}+3\,{\Theta}^{2}+15/2\,\Theta+{\frac{35}{16}} \right) +{t}^{2} \left( {\frac {61\,{\Theta}^{4}}{2}}+15\,{\Theta}^{3}-{\frac {21\,{\Theta}^{2}}{4}}+3\,\Theta+{\frac{95}{32}} \right) +\)

\(+{t}^{3} \left( -{\frac {121\,{\Theta}^{4}}{8}}-22\,{\Theta}^{3}-{\frac {51\,{\Theta}^{2}}{4}}-{\frac {23\,\Theta}{8}}+{\frac{23}{64}} \right) -3/16\,{t}^{4} \left( \Theta+1 \right)  \left( 44\,{\Theta}^{2}+88\,\Theta+61 \right) +\)

\(+{t}^{5} \left( {\frac {121\,{\Theta}^{4}}{32}}+{\frac {99\,{\Theta}^{3}}{4}}+{\frac {975\,{\Theta}^{2}}{16}}+{\frac {2145\,\Theta}{32}}+{\frac{7097}{256}} \right) +{t}^{6} \left( -{\frac {61\,{\Theta}^{4}}{32}}-{\frac {229\,{\Theta}^{3}}{16}}-{\frac {2547\,{\Theta}^{2}}{64}}-{\frac {193\,\Theta}{4}}-{\frac{11007}{512}} \right) +\)

\(+{t}^{7} \left( {\frac {21\,{\Theta}^{4}}{128}}+{\frac {9\,{\Theta}^{3}}{8}}+{\frac {177\,{\Theta}^{2}}{64}}+{\frac {375\,\Theta}{128}}+{\frac{1165}{1024}} \right) -{\frac {{t}^{8} \left( 2\,\Theta+3 \right) ^{4}}{4096}}\)

% Arr. No.
\textbf{257}:
%\(\displaystyle - \left( x+y+2\,z+2\,u \right)  \left( tz+tu+y-2\,z \right)  \left( tx+ty+2\,tz+2\,tu-2\,x-4\,z \right)  \left( tz+tu+x+y \right) xyzu\)
%
\({\Theta}^{2} \left( \Theta-1 \right) ^{2}-1/4\,t{\Theta}^{2} \left( 13\,{\Theta}^{2}-12\,\Theta+9 \right) +{t}^{2} \left( {\frac {33\,{\Theta}^{4}}{8}}+{\frac {15\,{\Theta}^{3}}{4}}+{\frac {17\,{\Theta}^{2}}{4}}+{\frac {13\,\Theta}{8}}+3/8 \right) +\)

\({t}^{3} \left( -{\frac {77\,{\Theta}^{4}}{32}}-{\frac {43\,{\Theta}^{3}}{4}}-{\frac {677\,{\Theta}^{2}}{64}}-{\frac {119\,\Theta}{16}}-{\frac{33}{16}} \right) +{t}^{4} \left( {\frac {85\,{\Theta}^{4}}{256}}+{\frac {1059\,{\Theta}^{3}}{128}}+{\frac {3735\,{\Theta}^{2}}{256}}+{\frac {753\,\Theta}{64}}+{\frac{499}{128}} \right) +\)

\(+{t}^{5} \left( {\frac {423\,{\Theta}^{4}}{1024}}-{\frac {495\,{\Theta}^{3}}{256}}-{\frac {9145\,{\Theta}^{2}}{1024}}-{\frac {2317\,\Theta}{256}}-{\frac{3715}{1024}} \right) +{t}^{6} \left( -{\frac {271\,{\Theta}^{4}}{1024}}-{\frac {419\,{\Theta}^{3}}{512}}+{\frac {1699\,{\Theta}^{2}}{1024}}+{\frac {6125\,\Theta}{2048}}+{\frac{3229}{2048}} \right) +\)

\(+{t}^{7} \left( {\frac {187\,{\Theta}^{4}}{4096}}+{\frac {297\,{\Theta}^{3}}{512}}+{\frac {9423\,{\Theta}^{2}}{16384}}+{\frac {105\,\Theta}{2048}}-{\frac{695}{4096}} \right)+{t}^{8} \left( {\frac {111\,{\Theta}^{4}}{8192}}-{\frac {321\,{\Theta}^{3}}{4096}}-{\frac {1859\,{\Theta}^{2}}{8192}}-{\frac {1745\,\Theta}{8192}}-{\frac{1055}{16384}} \right) +\)

\(+{t}^{9} \left( -{\frac {55\,{\Theta}^{4}}{8192}}-{\frac {5\,{\Theta}^{3}}{256}}-{\frac {1125\,{\Theta}^{2}}{65536}}-{\frac {25\,\Theta}{32768}}+{\frac{955}{262144}} \right) +{\frac {25\,{t}^{10} \left( 2\,\Theta+3 \right) ^{4}}{524288}}\)

% Arr. No.
\textbf{259}:
%
%\(\displaystyle  \left( x+y+z+u \right)  \left( x-y-z+u \right)  \left( -tz+tu-x+y \right)  \left( ty+tu-x-z \right) xyzu\)
\(1/4\,\Theta\, \left( \Theta-1 \right)  \left( -1+2\,\Theta \right) ^{2}+1/2\,t\Theta\, \left( 8\,{\Theta}^{2}-4\,\Theta+1 \right) +{t}^{2} \left( -3\,{\Theta}^{4}-2\,{\Theta}^{3}-{\frac {15\,{\Theta}^{2}}{4}}-{\frac {17\,\Theta}{4}}-1 \right) -\)

\(-1/2\,{t}^{3} \left( 2\,\Theta+1 \right)  \left( 8\,{\Theta}^{2}+8\,\Theta+13 \right) +{t}^{4} \left( 3\,{\Theta}^{4}+10\,{\Theta}^{3}+{\frac {63\,{\Theta}^{2}}{4}}+{\frac {37\,\Theta}{4}}+3/2 \right) +1/2\,{t}^{5} \left( \Theta+1 \right)  \left( 8\,{\Theta}^{2}+20\,\Theta+13 \right) -\)

\(-1/4\,{t}^{6} \left( \Theta+2 \right)  \left( \Theta+1 \right)  \left( 2\,\Theta+3 \right) ^{2}\)

% Arr. No.
\textbf{262}:
%\(\displaystyle - \left( -x+z+u \right)  \left( tz+x+y \right)  \left( ty-tz-z-u \right)  \left( ty+tu+x+y-z \right) xyzu\)
%
\(1/4\,\Theta\, \left( \Theta-1 \right)  \left( -1+2\,\Theta \right) ^{2}+1/28\,t\Theta\, \left( 164\,{\Theta}^{3}-72\,{\Theta}^{2}+111\,\Theta-9 \right) +{t}^{2} \left( {\frac {807\,{\Theta}^{4}}{49}}+{\frac {930\,{\Theta}^{3}}{49}}+{\frac {2623\,{\Theta}^{2}}{98}}+{\frac {1261\,\Theta}{98}}+{\frac{177}{56}} \right) +\)

\(+{t}^{3} \left( {\frac {1417\,{\Theta}^{4}}{49}}+{\frac {4012\,{\Theta}^{3}}{49}}+{\frac {23965\,{\Theta}^{2}}{196}}+{\frac {17571\,\Theta}{196}}+{\frac{22095}{784}} \right) +{t}^{4} \left( {\frac {1670\,{\Theta}^{4}}{49}}+{\frac {7806\,{\Theta}^{3}}{49}}+{\frac {58725\,{\Theta}^{2}}{196}}+{\frac {52533\,\Theta}{196}}+{\frac{76995}{784}} \right) +\)

\(+{t}^{5} \left( {\frac {1308\,{\Theta}^{4}}{49}}+{\frac {9096\,{\Theta}^{3}}{49}}+{\frac {86145\,{\Theta}^{2}}{196}}+{\frac {89409\,\Theta}{196}}+{\frac{36555}{196}} \right) +{t}^{6} \left( 12\,{\Theta}^{4}+{\frac {930\,{\Theta}^{3}}{7}}+{\frac {79011\,{\Theta}^{2}}{196}}+{\frac {94125\,\Theta}{196}}+{\frac{24207}{112}} \right) +\)

\(+{t}^{7} \left( {\frac {12\,{\Theta}^{4}}{49}}+{\frac {2340\,{\Theta}^{3}}{49}}+{\frac {10656\,{\Theta}^{2}}{49}}+{\frac {29817\,\Theta}{98}}+{\frac{2115}{14}} \right) +{t}^{8} \left( -{\frac {177\,{\Theta}^{4}}{49}}-{\frac {282\,{\Theta}^{3}}{49}}+{\frac {9369\,{\Theta}^{2}}{196}}+{\frac {19419\,\Theta}{196}}+{\frac{22899}{392}} \right) +\)

\(+{t}^{9} \left( -{\frac {115\,{\Theta}^{4}}{49}}-{\frac {724\,{\Theta}^{3}}{49}}-{\frac {1495\,{\Theta}^{2}}{98}}+{\frac {151\,\Theta}{49}}+{\frac{3639}{392}} \right) +{t}^{10} \left( -{\frac {26\,{\Theta}^{4}}{49}}-{\frac {306\,{\Theta}^{3}}{49}}-{\frac {1275\,{\Theta}^{2}}{98}}-{\frac {999\,\Theta}{98}}-{\frac{1683}{784}} \right) +\)

\(+{t}^{11} \left( {\frac {6\,{\Theta}^{4}}{49}}-{\frac {24\,{\Theta}^{3}}{49}}-{\frac {401\,{\Theta}^{2}}{196}}-{\frac {461\,\Theta}{196}}-{\frac{663}{784}} \right) +{t}^{12} \left( {\frac {5\,{\Theta}^{4}}{49}}+{\frac {20\,{\Theta}^{3}}{49}}+{\frac {32\,{\Theta}^{2}}{49}}+{\frac {24\,\Theta}{49}}+{\frac{57}{392}} \right) +{\frac {{t}^{13} \left( 2\,\Theta+3 \right) ^{4}}{784}}\)

% Arr. No.
\textbf{265}:
%\(\displaystyle  \left( x+y-z+2\,u \right)  \left( tx+ty-tz+2\,z \right)  \left( ty+2\,tu-2\,z \right)  \left( tu+x+2\,y-z \right) xyzu\)
%
\(1/4\,{\Theta}^{2} \left( -1+2\,\Theta \right)  \left( 2\,\Theta+1 \right) -1/32\,t \left( 2\,\Theta+1 \right)  \left( 32\,{\Theta}^{3}+16\,{\Theta}^{2}+22\,\Theta+7 \right) +{t}^{2} \left( {\frac {25\,{\Theta}^{4}}{16}}+{\frac {13\,{\Theta}^{3}}{4}}+{\frac {129\,{\Theta}^{2}}{32}}+{\frac {41\,\Theta}{16}}+{\frac{165}{256}} \right) +\)

\(+{t}^{3} \left( -{\frac {19\,{\Theta}^{4}}{32}}-{\frac {31\,{\Theta}^{3}}{16}}-{\frac {189\,{\Theta}^{2}}{64}}-{\frac {143\,\Theta}{64}}-{\frac{347}{512}} \right) +{t}^{4} \left( {\frac {7\,{\Theta}^{4}}{64}}+1/2\,{\Theta}^{3}+{\frac {119\,{\Theta}^{2}}{128}}+{\frac {13\,\Theta}{16}}+{\frac{283}{1024}} \right) -{\frac {{t}^{5} \left( 2\,\Theta+3 \right) ^{4}}{2048}}\)

% Arr. No.
\textbf{268}:
%\(\displaystyle  \left( x+y+z \right)  \left( -2\,tz+2\,tu+y \right)  \left( 2\,tx+2\,ty+u \right)  \left( 2\,tx+2\,ty-x-z+u \right) xyzu\)
%
\({\Theta}^{4}-t\Theta\, \left( 4\,{\Theta}^{3}-10\,{\Theta}^{2}-6\,\Theta-1 \right) +{t}^{2} \left( -23\,{\Theta}^{4}-62\,{\Theta}^{3}-50\,{\Theta}^{2}-57\,\Theta-{\frac{89}{4}} \right) +{t}^{3} \left( 42\,{\Theta}^{4}+168\,{\Theta}^{3}+332\,{\Theta}^{2}+66\,\Theta-{\frac{51}{2}} \right) +\)

\(+{t}^{4} \left( 236\,{\Theta}^{4}+328\,{\Theta}^{3}+2232\,{\Theta}^{2}+1846\,\Theta+{\frac{2567}{4}} \right) +{t}^{5} \left( 352\,{\Theta}^{4}-2912\,{\Theta}^{3}-6954\,{\Theta}^{2}-7628\,\Theta-3482 \right) +\)

\(+{t}^{6} \left( -864\,{\Theta}^{4}-4800\,{\Theta}^{3}-2304\,{\Theta}^{2}-5256\,\Theta-2788 \right) +{t}^{7} \left( -3648\,{\Theta}^{4}-4992\,{\Theta}^{3}+24680\,{\Theta}^{2}+44720\,\Theta+24712 \right) +\)

\(+{t}^{8} \left( -1408\,{\Theta}^{4}+28928\,{\Theta}^{3}+19232\,{\Theta}^{2}+2304\,\Theta-8216 \right) +{t}^{9} \left( 6656\,{\Theta}^{4}+58368\,{\Theta}^{3}+82592\,{\Theta}^{2}+45888\,\Theta+2336 \right) +\)

\(+{t}^{10} \left( 15616\,{\Theta}^{4}-5632\,{\Theta}^{3}-91904\,{\Theta}^{2}-128640\,\Theta-52480 \right) +{t}^{11} \left( 2560\,{\Theta}^{4}-20480\,{\Theta}^{3}-92800\,{\Theta}^{2}-121600\,\Theta-52480 \right) -\)

\(-1600\,{t}^{12} \left( 2\,\Theta+3 \right) ^{4}\)

% Arr. No.
\textbf{274}:
%\(\displaystyle - \left( x+y+z \right)  \left( -x-z+u \right)  \left( ty-tz+tu+x+y \right)  \left( -tz+tu+x+y+u \right) xyzu\)
%
\(1/4\,\Theta\, \left( \Theta-1 \right)  \left( -1+2\,\Theta \right) ^{2}+1/40\,t\Theta\, \left( 332\,{\Theta}^{3}-216\,{\Theta}^{2}+219\,\Theta-27 \right) +{t}^{2} \left( {\frac {2871\,{\Theta}^{4}}{100}}+{\frac {1413\,{\Theta}^{3}}{50}}+{\frac {12849\,{\Theta}^{2}}{400}}+{\frac {5199\,\Theta}{400}}+{\frac{95}{32}} \right) +\)

\(+{t}^{3} \left( {\frac {10169\,{\Theta}^{4}}{200}}+{\frac {17737\,{\Theta}^{3}}{100}}+{\frac {152361\,{\Theta}^{2}}{800}}+{\frac {92291\,\Theta}{800}}+{\frac{999}{32}} \right) +{t}^{4} \left( {\frac {1873\,{\Theta}^{4}}{50}}+{\frac {40143\,{\Theta}^{3}}{100}}+{\frac {506483\,{\Theta}^{2}}{800}}+{\frac {352509\,\Theta}{800}}+{\frac{89327}{640}} \right) +\)

\(+{t}^{5} \left( -{\frac {2871\,{\Theta}^{4}}{100}}+{\frac {42237\,{\Theta}^{3}}{100}}+{\frac {942333\,{\Theta}^{2}}{800}}+{\frac {752883\,\Theta}{800}}+{\frac{545069}{1600}} \right) +{t}^{6} \left( -{\frac {19521\,{\Theta}^{4}}{200}}+{\frac {4491\,{\Theta}^{3}}{100}}+{\frac {962217\,{\Theta}^{2}}{800}}+{\frac {185391\,\Theta}{160}}+{\frac{1549843}{3200}} \right) +\)

\(+{t}^{7} \left( -{\frac {19521\,{\Theta}^{4}}{200}}-{\frac {43533\,{\Theta}^{3}}{100}}+{\frac {385929\,{\Theta}^{2}}{800}}+{\frac {577359\,\Theta}{800}}+{\frac{1234843}{3200}} \right) +{t}^{8} \left( -{\frac {2871\,{\Theta}^{4}}{100}}-{\frac {53721\,{\Theta}^{3}}{100}}-{\frac {209163\,{\Theta}^{2}}{800}}+{\frac {26223\,\Theta}{800}}+{\frac{202241}{1600}} \right) +\)

\(+{t}^{9} \left( {\frac {1873\,{\Theta}^{4}}{50}}-{\frac {25159\,{\Theta}^{3}}{100}}-{\frac {277141\,{\Theta}^{2}}{800}}-{\frac {183103\,\Theta}{800}}-{\frac{102173}{3200}} \right) +{t}^{10} \left( {\frac {10169\,{\Theta}^{4}}{200}}+{\frac {2601\,{\Theta}^{3}}{100}}-{\frac {29271\,{\Theta}^{2}}{800}}-{\frac {50553\,\Theta}{800}}-{\frac{647}{32}} \right) +\)

\(+{t}^{11} \left( {\frac {2871\,{\Theta}^{4}}{100}}+{\frac {4329\,{\Theta}^{3}}{50}}+{\frac {47841\,{\Theta}^{2}}{400}}+{\frac {32523\,\Theta}{400}}+{\frac{3607}{160}} \right) +1/40\,{t}^{12} \left( \Theta+1 \right)  \left( 332\,{\Theta}^{3}+1212\,{\Theta}^{2}+1647\,\Theta+794 \right) +\)

\(+1/4\,{t}^{13} \left( \Theta+2 \right)  \left( \Theta+1 \right)  \left( 2\,\Theta+3 \right) ^{2}\)

%\newpage

\egroup

\normalsize

\end{document}